   \newcommand{\ba}{\begin{eqnarray}}
   \newcommand{\na}{\end{eqnarray}}
   \newcommand{\ban}{\begin{eqnarray*}}
   \newcommand{\nan}{\end{eqnarray*}}
   \newcommand{\baln}{\begin{align*}}
   \newcommand{\naln}{\end{align*}}
\newtheorem{thm}{Theorem}[section]
\newtheorem{prop}[thm]{Proposition}
\newtheorem{cor}[thm]{Corollary}
\newtheorem{defi}[thm]{Definition}
\newtheorem{lem}[thm]{Lemma}
\newcommand{\hl}{\mathcal{H}}
\newcommand{\sll}{\mathcal{S}}
\newcommand{\el}{\mathcal{E}}
\newcommand{\lal}{\mathcal{L}}
\newcommand{\ul}{\mathcal{U}}
\newcommand{\cl}{\mathcal{C}}
\newcommand{\bl}{\mathcal{B}}
\newcommand{\ml}{\mathcal{M}}
\newcommand{\fl}{\mathcal{F}}
\newcommand{\rb}{\mathbb{R}}
\newcommand{~}{\quad}
\newcommand{\cb}{\mathbb{C}}
\newcommand{\zb}{\mathbb{Z}}
\newcommand{\pb}{\mathbb{P}}
\begin{document}

\title[Weinstein conjecture in product of symplectic manifolds]{The Weinstein Conjecture in Product of Symplectic Manifolds}

\author{Yanqiao Ding \& Jianxun Hu$^*$  }
\address{Department of Mathematics\\ Sun Yat-Sen University\\
                       Guangzhou, 510275 \\ P. R. China}
\email{dingyq6@mail2.sysu.edu.cn}

\address{Department of Mathematics\\ Sun Yat-Sen University\\
                       Guangzhou, 510275 \\ P. R. China}
\email{stsjxhu@mail.sysu.edu.cn}
\thanks{${}^*$Partially supported by the NSFC Grant  11371381}

\begin{abstract}
 In this paper, using pseudo-holomorphic curve method, one proves the Weinstein conjecture in the product $P_1\times P_2$ of two strongly geometrically bounded symplectic manifolds under some conditions with $P_1$. In particular, if $N$ is a closed manifold or a noncompact manifold of finite topological type, our result implies that the Weinstein conjecture in $\mathbb{C}\mathbb{P}^2\times T^*N$ holds.\\
Key words: Weinstein conjecture, $J$-holomorphic sphere, geometrically bounded.\\
Subject Classification: 53D35, 58D10.
\end{abstract}

\date{\today}
\maketitle
\tableofcontents

\section{Introduction }

Let $M$ be a symplectic manifold with symplectic form $\omega$.
A hypersurface $S\subset M$ is said to be of contact type if there exists a vector field $X$ defined on some neighborhood $U$ of $S$ such that
$(i)$ $X$ is transversal to $S$ and $(ii)$ $L_X\omega=\omega$.

For any hypersurface $S$ in symplectic manifold $M$, there exists a $1$-dimensional characteristic line bundle $\lal_S\subset TS$ defined by:
$$
\lal_S=\{ (x,\xi)\in T_xS|~\omega_x(\xi,\eta)=0, ~\forall\eta\in T_xS\}.
$$
Let $\xi$ be a section of the characteristic line bundle. The Weinstein conjecture claims that if $S$ is a compact hypersurface of contact type,
then $S$ carries at least one closed orbit of $\xi$, see \cite{w1979}.

In 1987, C. Viterbo \cite{v1987} proved the Weinstein conjecture for $(\rb^{2n},\omega_0)$ with the standard symplectic form $\omega_0$.
Later H. Hofer and C. Viterbo \cite{hv1988} showed the Weinstein conjecture was true for $(T^*M,-d\lambda)$,
where $\lambda$ was the Liouville form on the cotangent bundle $T^*M$ of a compact manifold $M$.
A. Floer, H. Hofer and C. Viterbo \cite{fhv1990} proved the stabilized Weinstein conjecture for $(P\times \cb^l,\omega\oplus\omega_0)$
under the assumption $[\omega]=0$ on $\pi_2(P)$. In 1992, H. Hofer and C. Viterbo \cite{hv1992} introduced the pseudo-holomorphic curve method
into the study of the Weinstein conjecture for some cases where the holomorphic spheres appeared.
They proved the Weinstein conjecture in $\cb\pb^n$, $S^2\times P$, if $P$ was a compact symplectic manifold with some conditions.
Lu \cite{lu1998} extended the results of H. Hofer and C. Viterbo to the strongly geometrically bounded (SGB) symplectic manifolds.
He showed the Weinstein conjecture holds in $S^2\times T^*N$, if $N$ was a closed manifold or a noncompact manifold of finite topological type.
G. Liu and G. Tian completely proved the stabilized version Weinstein conjecture in \cite{lt2000}.

Since the product of regular almost complex structures is not regular in general (see \cite{lu1997}),
the method of \cite{hv1992} can not be applied directly to any product manifolds.
Making use of the regularity criterion in \cite{ms2004}, we proved that there exists a regular almost complex structure, which is the product of regular almost complex structures, on the product of
some 4-dimensional manifolds and symplectic manifolds.  So this makes it possible to use the method of \cite{hv1992} to study the Weinstein conjecture for the product manifolds.
In this paper, one proves the Weinstein conjecture in the product $P_1\times P_2$ of two SGB symplectic manifolds under some conditions with $P_1$. In particular, if $N$ is a compact manifold or a noncompact manifold of finite topological type, our result implies that the Weinstein conjecture in $\mathbb{C}\mathbb{P}^2\times T^*N$ holds.

Next, we will introduce some notations and our result. Let $(V,\omega)$ be a symplectic manifold.
Let $\fl(V,\omega)$ be the space of all smooth almost complex structures which are compatible with $\omega$ on $(V,\omega)$.
The subset of regular almost complex structures (see Definition $\ref{defreg}$) in $\fl(V,\omega)$ is denoted by $\fl_{reg}(V,\omega)$.
For $J\in\fl(V,\omega)$, define $m(V,\omega,J)$ in $(0,+\infty]$ by
$$
m=\inf\{\langle\omega,[u]\rangle| \text{u is a nonconstant $J$-holomorphic sphere}\},
$$
where $\langle\omega,[u]\rangle=\int_{S^2}u^*\omega$ which depends only on the free homotopy class $[u]$ of $u$.
Define $m(V,\omega)\in[0,+\infty]$ by
\[
m(V,\omega)=\inf\{\langle\omega,\alpha\rangle|~\alpha\in[S^2,V],\langle\omega,\alpha\rangle>0\},
\]
where $[S^2,V]$ stands for the free homotopy classes.
A homotopy class $\alpha$ is said to be $\omega$-minimal if $m(V,\omega)=\langle\omega,\alpha\rangle$ and $\langle\omega,\alpha\rangle>0$.
Let $\alpha$ be an $\omega$-minimal homotopy class such that there exists a $J\in\fl(V,\omega)$ satisfies $m(V,\omega,J)=\langle\omega,\alpha\rangle$.
Define $\hl(\alpha,J,\Sigma_0,\Sigma_{\infty})$ to be the set of all $u\in C^\infty(S^2,V)$ such that
\begin{equation}\label{eqh}
[u]=\alpha,~u(*)\in\Sigma_*,~*\in\{0,\infty\},~\int_{|z|\leqslant1}u^*\omega=\frac{1}{2}\langle\omega,\alpha\rangle,~\bar\partial_Ju=0
\end{equation}
where $\Sigma_0,\Sigma_{\infty}$ are two disjoint smooth submanifolds of $V$ and closed as subsets.
We also assume that one of $\Sigma_0$ and $\Sigma_\infty$ is compact.

Under certain conditions, there are almost complex structures $\{\tilde J\}$,
which are as close as we want to $J$ with respect to $C^1$-topology, such that $\hl(\alpha,\tilde J,\Sigma_0,\Sigma_{\infty})$
is a smooth compact free $S^1$-manifold. Such a $\tilde J$ is called a regular almost complex structure at the situation
$(\alpha,\Sigma_0,\Sigma_{\infty})$.
Moreover, for any given regular $\tilde J_1$ and $\tilde J_2$ which are close to $J$ the compact smooth $S^1$-manifolds $\hl_1$ and $\hl_2$ belong
to the same free $S^1$-cobordism class. $d(\alpha, J,\Sigma_0,\Sigma_{\infty}):=[\hl(\alpha,\tilde J,\Sigma_0,\Sigma_{\infty})]$,
where $\tilde J$ is in a sufficiently small neighborhood of $J$.
The definition of $d(\alpha, J,\Sigma_0,\Sigma_{\infty})$ is not depend on the choice of $\tilde J$.

The following is our main result of this paper.
\begin{thm}\label{p4dsgb}
Let $(P_1,\omega_1)$, $(P_2,\omega_2)$ be two SGB symplectic manifolds with $dim P_1=4$.
$\alpha_1\in[S^2,P_1]$ is an $\omega_1$-minimal free homotopy class
which can be represented by an embedded $J_1$-holomorphic sphere such that
$$
0<\langle\omega_1,\alpha_1\rangle\leqslant m(P_2,\omega_2),
$$
where $J_1\in\fl_{reg}(P_1,\omega_1)$. $\Sigma^1_0,\Sigma^1_{\infty}$ are two disjoint nonempty compact submanifolds of $P_1$.
$\Sigma^2_0$ is a nonempty compact submanifold of $P_2$. Let $\Sigma_0=\Sigma^1_0\times\Sigma^2_0$, $\Sigma_{\infty}=\Sigma^1_{\infty}\times P_2$.
Suppose that there is a smooth Hamiltonian $H:P_1\times P_2\to\rb$ such that
\[
H|_{\ul(\Sigma_0)}\equiv h_0, H|_{\ul(\Sigma_{\infty})}\equiv h_{\infty},h_0<h_{\infty} \text{ and } h_0\leqslant H\leqslant h_{\infty}.
\]
Where the open neighborhoods $\ul(\Sigma_0)$ and $\ul(\Sigma_{\infty})$ are disjoint and such that
\begin{equation}\label{eqkcpt}
K:=(P_1\times P_2)\setminus(\ul(\Sigma_0)\cup\ul(\Sigma_{\infty}))\text{ is compact}.
\end{equation}
Then if $d(\alpha_1,J_1,\Sigma^1_0,\Sigma^1_{\infty})\neq[\varnothing]$,
the Hamiltonian system $\dot{x}=X_H(x)$ possesses a nonconstant $T$-periodic solution $x=x(t)$ with
\[
0<T(h_{\infty}-h_0)<\langle\omega_1,\alpha_1\rangle, h_0\leqslant H(x(t))\leqslant h_{\infty}.
\]
\end{thm}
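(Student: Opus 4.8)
The plan is to argue by contradiction, transferring the nonvanishing of the invariant from $P_1$ to the product $P_1\times P_2$ and then running a Hamiltonian-driven neck-stretching argument: if no periodic orbit with the stated bounds exists, the pseudo-holomorphic object produced by Gromov compactness in the long-neck limit has nowhere to break, contradicting the nonvanishing of $d$. First I would fix a product almost complex structure $J=J_1\times J_2$ on $P_1\times P_2$ with $J_2\in\fl_{reg}(P_2,\omega_2)$. By the regularity criterion for products established earlier in the paper, after an arbitrarily small $C^1$-perturbation within product-type structures $J$ is regular at the situation $(\alpha,\Sigma_0,\Sigma_\infty)$, so the invariant $d(\alpha,J,\Sigma_0,\Sigma_\infty)$ is defined, where $\alpha\in[S^2,P_1\times P_2]$ is the class projecting to $\alpha_1$ on $P_1$ and to the trivial class on $P_2$; in particular $\langle\omega_1\oplus\omega_2,\alpha\rangle=\langle\omega_1,\alpha_1\rangle$.

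The first substantive step is to show $d(\alpha,J,\Sigma_0,\Sigma_\infty)\neq[\varnothing]$. For a product structure any $J$-holomorphic sphere $u=(u_1,u_2)$ of class $\alpha$ splits into a $J_1$-holomorphic $u_1$ and a $J_2$-holomorphic $u_2$ with $\int u_1^*\omega_1+\int u_2^*\omega_2=\langle\omega_1,\alpha_1\rangle$. Since $\langle\omega_1,\alpha_1\rangle\leqslant m(P_2,\omega_2)$, the component $u_2$ cannot be a nonconstant sphere, as that would cost area $\geqslant m(P_2,\omega_2)\geqslant\langle\omega_1,\alpha_1\rangle$ and force $u_1$ constant, contradicting $\alpha_1\neq0$. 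Hence $u_2\equiv c$ and $u_1$ represents $\alpha_1$ with $\int u_1^*\omega_1=\langle\omega_1,\alpha_1\rangle$. Because $\Sigma_0=\Sigma^1_0\times\Sigma^2_0$ and $\Sigma_\infty=\Sigma^1_\infty\times P_2$, the incidence and area conditions in $(\ref{eqh})$ reduce to $u_1(0)\in\Sigma^1_0$, $u_1(\infty)\in\Sigma^1_\infty$, $\int_{|z|\leqslant1}u_1^*\omega_1=\tfrac12\langle\omega_1,\alpha_1\rangle$ and $c\in\Sigma^2_0$, the condition at $\infty$ on $u_2$ being vacuous. Thus $\hl(\alpha,J,\Sigma_0,\Sigma_\infty)$ is, as a free $S^1$-manifold (the $S^1$ acting only through $u_1$), the product $\hl(\alpha_1,J_1,\Sigma^1_0,\Sigma^1_\infty)\times\Sigma^2_0$, whose cobordism class is $d(\alpha_1,J_1,\Sigma^1_0,\Sigma^1_\infty)\times[\Sigma^2_0]$. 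As $d(\alpha_1,J_1,\Sigma^1_0,\Sigma^1_\infty)\neq[\varnothing]$ and $\Sigma^2_0\neq\varnothing$, we conclude $d(\alpha,J,\Sigma_0,\Sigma_\infty)\neq[\varnothing]$.

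Now suppose, for contradiction, that $\dot x=X_H(x)$ has no nonconstant $T$-periodic solution with $0<T(h_\infty-h_0)<\langle\omega_1,\alpha_1\rangle$ and $h_0\leqslant H\leqslant h_\infty$. I would construct a one-parameter family $\{J_R\}_{R>0}$ of $\omega$-compatible almost complex structures agreeing with $J$ outside the compact set $K$ of $(\ref{eqkcpt})$ and, inside $K$, twisted by the Hamiltonian flow so as to insert a neck of length $R$ across the level sets of $H$ from $\{H=h_0\}$ to $\{H=h_\infty\}$; using $J\nabla H=X_H$ and reparametrizing in the $H$-direction, finite-energy $J_R$-cylinders in the neck are governed by a Floer-type equation whose breaking produces periodic orbits of $X_H$. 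By the $S^1$-cobordism invariance of $d$ under compactly supported deformations of the almost complex structure, $d(\alpha,J_R,\Sigma_0,\Sigma_\infty)=d(\alpha,J,\Sigma_0,\Sigma_\infty)\neq[\varnothing]$ for every $R$, so there exist $J_R$-holomorphic spheres $u_R\in\hl(\alpha,J_R,\Sigma_0,\Sigma_\infty)$. Each $u_R$ has total area $\langle\omega_1,\alpha_1\rangle$, satisfies $u_R(0)\in\Sigma_0\subset\{H=h_0\}$ and $u_R(\infty)\in\Sigma_\infty\subset\{H=h_\infty\}$, and hence must cross every intermediate level of $H$.

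Finally I would let $R\to\infty$. The SGB hypothesis on both factors supplies the isoperimetric and monotonicity inequalities needed to confine the images $u_R$ to a fixed compact set and to run Gromov–Hofer compactness, while the threshold $\langle\omega_1,\alpha_1\rangle\leqslant m(P_2,\omega_2)$ again rules out sphere bubbling in the $P_2$ factor. In the limit the neck yields a nonconstant finite-energy pseudo-holomorphic cylinder which, by the analysis of finite-energy curves in the symplectization of a level set of $H$, converges at its ends to a nonconstant $T$-periodic orbit $x(t)$ of $X_H$ lying in a level set $\{H=c\}$ with $h_0\leqslant c\leqslant h_\infty$, so that $h_0\leqslant H(x(t))\leqslant h_\infty$. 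The period estimate comes from an area bound: the symplectic area swept while the cylinder traverses $\{H=h_0\}$ to $\{H=h_\infty\}$ is at least $T(h_\infty-h_0)$, whereas the total area available is $\langle\omega_1,\alpha_1\rangle$, and the strict inequality $T(h_\infty-h_0)<\langle\omega_1,\alpha_1\rangle$ is forced by the positive area retained by the surviving component away from the neck. This orbit contradicts the standing assumption. I expect the main obstacle to be precisely this limit analysis: securing uniform energy and gradient bounds under SGB so that compactness applies, excluding the degenerate alternatives (escape to infinity, sphere bubbling, or a neck carrying zero area so that no orbit appears), and upgrading the limiting finite-energy cylinder to an honest nonconstant orbit with the sharp period bound — the step where the SGB geometry, the $\omega_1$-minimality of $\alpha_1$, and the energy threshold $m(P_2,\omega_2)$ must be used in concert.
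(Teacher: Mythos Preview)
Your reduction of the product invariant is essentially the paper's: you correctly observe that for $J=J_1\times J_2$ every $J$-sphere in class $\alpha$ has constant $P_2$-component (by the area inequality $\langle\omega_1,\alpha_1\rangle\leqslant m(P_2,\omega_2)$), so $\hl(\alpha,J,\Sigma_0,\Sigma_\infty)\cong\hl(\alpha_1,J_1,\Sigma_0^1,\Sigma_\infty^1)\times\Sigma_0^2$ and $d(\alpha,J,\Sigma_0,\Sigma_\infty)\neq[\varnothing]$. One correction: the paper's Proposition~\ref{p4dj} gives regularity of the product $J$ itself at this situation, so no perturbation ``within product-type structures'' is needed.

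The second half of your argument, however, takes a route the paper does not, and as written it has a real gap. The paper does \emph{not} deform the almost complex structure by inserting a geometric neck. Instead it keeps $J$ fixed and studies the $\lambda$-parametrized perturbed Cauchy--Riemann section $f_\lambda(u)=\bar\partial_Ju+\lambda h(u)$ of Section~\ref{defgb}, where $h$ encodes $\nabla H$; the solution set $\cl=\{(\lambda,u):f_\lambda(u)=0\}$ is then shown to be noncompact by Proposition~\ref{plu31} (since $d\neq[\varnothing]$). A divergent sequence $(\lambda_k,u_k)$ with $\lambda_k\to\lambda$ is pulled back to the cylinder, the interval $[s_k^0,s_k^\infty]$ between the last exit from $\ul(\Sigma_0)$ and the first entry into $\ul(\Sigma_\infty)$ is shown to stretch to infinity, and a loop $x_k=v_k(s_k,\cdot)$ is extracted converging to a $\lambda$-periodic orbit $x$ of $X_H$; the period bound $0<T(h_\infty-h_0)<\langle\omega_1,\alpha_1\rangle$ comes directly from the a~priori estimate $\lambda\in(0,\lambda_\infty-\varepsilon']$ of Proposition~\ref{pmain}. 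The exclusion of $\lambda=0$ and of $x$ constant is done by projecting to $P_1$ and using $\omega_1$-minimality of $\alpha_1$.

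Your proposed SFT-style neck-stretching via a family $\{J_R\}$ runs into two concrete problems. First, the invariant $d(\alpha,J,\Sigma_0,\Sigma_\infty)$ in this paper is only defined for $\tilde J$ in a small neighborhood $\ul(J,\delta,f_{r_0})$ of $J$; for large $R$ your $J_R$ leaves this neighborhood, and the $S^1$-cobordism invariance you invoke is not available (this is precisely the difficulty the Hofer--Viterbo $\lambda$-scheme circumvents by moving the parameter into the equation rather than into $J$). Second, the level sets of $H$ carry no contact-type or stable-Hamiltonian hypothesis here, so the asymptotic analysis of finite-energy cylinders in a stretched neck---``converges at its ends to a nonconstant $T$-periodic orbit''---is not justified by standard SFT compactness; you would need to redevelop the asymptotics from scratch, which in effect reproduces the Hofer--Viterbo analysis anyway.
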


As in \cite{hv1992}, it is easy to prove the Weinstein conjecture in $P_1\times P_2$ from Theorem $\ref{p4dsgb}$.
\begin{cor}
Let $(P_1\times P_2,\omega_1\oplus\omega_2)$, $\alpha_1\in[S^2,P_1]$, $J_1\in\fl_{reg}(P_1,\omega_1)$,
$\Sigma^1_0$, $\Sigma^1_{\infty}$, $\Sigma_0$, $\Sigma_\infty$
satisfy the hypothesis of Theorem $\ref{p4dsgb}$.
Then any stable compact smooth hypersurface $\sll$ in $(P_1\times P_2,\omega_1\oplus\omega_2)$ separating $\Sigma_0$
from $\Sigma_\infty$ possesses at least one periodic Hamiltonian trajectory.
\end{cor}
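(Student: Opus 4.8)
The plan is to reduce the existence of a closed Hamiltonian trajectory on $\sll$ to the almost-existence statement of Theorem \ref{p4dsgb} by building a Hamiltonian adapted to the stable structure of $\sll$, following the scheme of \cite{hv1992}. First I would use \emph{stability} of $\sll$ to produce a collar: by definition there is a diffeomorphism $\Psi:\sll\times(-1,1)\to U$ onto an open neighborhood $U$ of $\sll$ with $\Psi(\cdot,0)=\mathrm{id}_\sll$, such that the hypersurfaces $\sll_\tau:=\Psi(\sll\times\{\tau\})$ foliate $U$ and their characteristic line bundles $\lal_{\sll_\tau}$ all correspond to $\lal_\sll$ under $\Psi$. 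Shrinking $U$, I may assume that $U$ is disjoint from the prescribed neighborhoods $\ul(\Sigma_0),\ul(\Sigma_\infty)$ and that every $\sll_\tau$ still separates $\Sigma_0$ from $\Sigma_\infty$. Since $\sll$ separates, $(P_1\times P_2)\setminus\sll=A_0\sqcup A_\infty$ with $\Sigma_0\subset A_0$ and $\Sigma_\infty\subset A_\infty$.

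Next I would define a Hamiltonian $H$ matching the hypotheses of Theorem \ref{p4dsgb}. Pick $h_0<h_\infty$ with $h_\infty-h_0$ small, and a smooth nondecreasing $f:(-1,1)\to[h_0,h_\infty]$ with $f\equiv h_0$ near $-1$, $f\equiv h_\infty$ near $+1$, and $f'>0$ on the intermediate range. Set $H:=h_0$ on $A_0\setminus U$, $H:=h_\infty$ on $A_\infty\setminus U$, and $H:=f\circ\tau$ on $U$, where $\tau$ denotes the collar coordinate. Then $H$ is globally smooth, $h_0\le H\le h_\infty$, and $H\equiv h_0$ on $\ul(\Sigma_0)$, $H\equiv h_\infty$ on $\ul(\Sigma_\infty)$; the compactness $(\ref{eqkcpt})$ holds for a suitable choice of these neighborhoods inherited from the hypotheses. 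Thus all assumptions of Theorem \ref{p4dsgb} are met, yielding a nonconstant $T$-periodic solution $x(t)$ of $\dot x=X_H(x)$ with $0<T(h_\infty-h_0)<\langle\omega_1,\alpha_1\rangle$ and $h_0\le H(x(t))\le h_\infty$.

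Then I would locate this orbit on a single level hypersurface and transfer it to $\sll$. Because $H$ is locally constant off $U$, $X_H\equiv0$ there; as $x$ is nonconstant, its image lies in $U$. Since $H$ is constant along trajectories, $x(t)$ stays on one level $\{\tau=\tau_0\}=\sll_{\tau_0}$, and on $U$ one has $X_H=(f'\circ\tau)\,X_\tau$, where $X_\tau$ is the Hamiltonian field of the coordinate $\tau$; as $d\tau$ annihilates $T\sll_{\tau_0}$, $X_\tau$ is tangent to $\sll_{\tau_0}$ and spans its characteristic line bundle $\lal_{\sll_{\tau_0}}$. Since $f'(\tau_0)\neq0$, the curve $x(t)$ is a nonconstant closed characteristic on $\sll_{\tau_0}$. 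Finally, the defining property of stability, that $\Psi$ carries $\lal_{\sll_{\tau_0}}$ to $\lal_\sll$, transports this closed characteristic to a periodic Hamiltonian trajectory on $\sll=\sll_0$ itself.

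The main obstacle is not any single computation but the careful use of stability at two distinct places: producing a collar in which the characteristic foliations of all nearby levels are conjugate, and then using that conjugacy to descend the orbit obtained on the auxiliary level $\sll_{\tau_0}$ back to $\sll$. Ensuring that $H$ is smooth across $\partial U$ and that the orbit cannot escape into the region where $H$ is constant are the remaining points to verify, but both are routine once the collar and the profile function $f$ are fixed.
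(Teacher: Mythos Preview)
Your proposal is correct and follows exactly the approach the paper indicates: the paper gives no independent proof of this corollary but simply states ``As in \cite{hv1992}, it is easy to prove the Weinstein conjecture in $P_1\times P_2$ from Theorem~\ref{p4dsgb}'', and your argument is precisely the standard Hofer--Viterbo reduction (stable collar, profile Hamiltonian, apply the main theorem, transfer the orbit via the stability conjugacy). Your write-up is in fact more detailed than what the paper supplies.
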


\begin{cor}\label{cormain}
Let $u:S^2=\cb\cup\{\infty\}\hookrightarrow\cb\pb^2$ be a holomorphic embedding and $\{x\}$,$\{y\}$ two different points in the image of $u$.
Suppose $N$ is a closed manifold or a manifold of finite topological type.
Define $\Sigma_0=\{p_0\}$, $p_0\in\{x\}\times T^*N$, $\Sigma_\infty=\{y\}\times T^*N$.
Then any stable compact smooth hypersurface $\sll$ in $\cb\pb^2\times T^*N$ separating $\Sigma_0$
from $\Sigma_\infty$ possesses at least one periodic Hamiltonian trajectory.
\end{cor}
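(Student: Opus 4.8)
The plan is to realize Corollary \ref{cormain} as a special case of the preceding Corollary (the Weinstein conjecture in $P_1\times P_2$) by exhibiting the required data $(P_1,P_2,\alpha_1,J_1,\Sigma^1_0,\Sigma^1_{\infty},\Sigma^2_0)$ and then checking the only nontrivial hypothesis, the nonvanishing of $d$. I would set $P_1=\cb\pb^2$ with the Fubini--Study form $\omega_1$ and $P_2=T^*N$ with its canonical symplectic form $\omega_2$. Then $P_1$ is closed, hence SGB, with $\dim P_1=4$, while $P_2$ is SGB precisely under the hypothesis that $N$ is closed or of finite topological type, which is Lu's theorem recalled in the introduction. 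For $\alpha_1$ I take the class of a complex line and for $J_1$ the standard integrable structure: a line is an embedded $J_1$-holomorphic sphere representing the $\omega_1$-minimal class, and $J_1$ is regular for it by automatic transversality in dimension four, since $c_1\cdot\alpha_1=3\geqslant1$. Because $\omega_2$ is exact, every sphere in $P_2$ has zero area, so $m(P_2,\omega_2)=+\infty$ and the inequality $0<\langle\omega_1,\alpha_1\rangle\leqslant m(P_2,\omega_2)$ holds automatically.

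Next I would match the submanifolds. Writing the given point as $p_0=(x,q_0)$ with $q_0\in T^*N$, set $\Sigma^1_0=\{x\}$, $\Sigma^1_{\infty}=\{y\}$ and $\Sigma^2_0=\{q_0\}$. These are nonempty compact submanifolds, $\Sigma^1_0$ and $\Sigma^1_{\infty}$ are disjoint since $x\neq y$, and one has $\Sigma_0=\Sigma^1_0\times\Sigma^2_0=\{p_0\}$ and $\Sigma_{\infty}=\Sigma^1_{\infty}\times P_2=\{y\}\times T^*N$, exactly as in the hypotheses of the preceding Corollary.

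The heart of the argument, and the step I expect to be the main obstacle, is to show $d(\alpha_1,J_1,\{x\},\{y\})\neq[\varnothing]$. Classically there is a unique complex line $L$ through the two distinct points $x,y$, and since $x,y$ lie on the image of $u$ this line is exactly $u(S^2)$. I would first argue regularity at the constrained situation $(\alpha_1,\{x\},\{y\})$: the unparametrized lines through $x$ form a $\cb\pb^1$, and evaluation at a second point identifies this family with $\cb\pb^2\setminus\{x\}$, so the incidence condition at $y$ is cut out transversally and contributes the single line $L$. I would then identify the parametrized moduli space $\hl(\alpha_1,J_1,\{x\},\{y\})$: imposing $u(0)=x$ and $u(\infty)=y$ leaves exactly the reparametrization freedom $z\mapsto\lambda z$ with $\lambda\in\cb^*$, and since $\int_{|z|\leqslant1}u^*\omega_1$ is a strictly monotone function of $|\lambda|$ running from $0$ to $\langle\omega_1,\alpha_1\rangle$, the normalization $\int_{|z|\leqslant1}u^*\omega_1=\tfrac12\langle\omega_1,\alpha_1\rangle$ cuts this $\cb^*$ down to a single free $S^1$-orbit. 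Thus $\hl\cong S^1$ with its standard free $S^1$-action. A single free orbit represents the generator of the free $S^1$-cobordism group: if it bounded a compact free $S^1$-manifold $W$, then $W/S^1$ would be a compact $1$-manifold whose boundary is a single point, which is impossible; hence $d(\alpha_1,J_1,\{x\},\{y\})\neq[\varnothing]$.

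With all hypotheses of the preceding Corollary verified, it applies to $(\cb\pb^2\times T^*N,\omega_1\oplus\omega_2)$ and yields that any stable compact smooth hypersurface $\sll$ separating $\Sigma_0=\{p_0\}$ from $\Sigma_{\infty}=\{y\}\times T^*N$ carries at least one periodic Hamiltonian trajectory, which is the assertion. The only delicate points are the SGB property of $T^*N$, where the topological hypothesis on $N$ enters, and the transversality and cobordism computation above; the remaining verifications are formal.
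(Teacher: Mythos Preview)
Your proposal is correct and follows essentially the same route as the paper: one specializes the preceding corollary to $P_1=\cb\pb^2$ with the standard complex structure and $P_2=T^*N$, invoking the SGB property of $T^*N$ (which the paper attributes to Lu for closed $N$ and proves as Proposition~\ref{psgbcotangent} for $N$ of finite topological type) together with the computation $d(\alpha_1,i,\{x\},\{y\})=[S^1]\neq[\varnothing]$. You supply more detail than the paper does on the regularity of $i$, the identification $\hl\cong S^1$, and the nontriviality of $[S^1]$ in free $S^1$-cobordism, all of which the paper simply asserts.
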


\section{Product of Regular Almost Complex Structures}\label{secj}

Let $(V,\omega)$ be a symplectic manifold and $J\in\fl(V,\omega)$. For a smooth map $u:S^2\to V$,
the space of smooth vector fields $\xi(z)\in T_{u(z)}V$ along $u$ will be denoted by $\Omega^0(S^2,u^*TV)$
and the space of smooth $J$-antilinear $1$-forms on $S^2$ with values in $u^*TV$ by $\Omega^{0,1}(S^2,u^*TV)$.
Then the vertical differential of $\bar\partial_J(u)$, $D_u:\Omega^0(S^2,u^*TV)\to\Omega^{0,1}(S^2,u^*TV)$,
have the following expression:
\begin{equation}\label{edu}
D_u\xi=\frac{1}{2}(\nabla\xi+J(u)\nabla\xi\circ i)-\frac{1}{2}J(u)(\nabla_\xi J)(u)\partial_J(u),~\forall\xi\in\Omega^0(S^2,u^*TV)
\end{equation}
where $\partial_J(u):=\frac{1}{2}(du-J\circ du\circ i)$ and $\nabla$ denotes the Levi-Civita connection of the metric $\omega(\cdot,J\cdot)$.

A $J$-homomorphic sphere $u:S^2\to V$ is said to be {\bf multiply covered} if there exists a $J$-holomorphic sphere $u':S^2\to V$,
and a holomorphic branched covering $\phi:S^2\to S^2$ such that
\[
u=u'\circ\phi,~deg(\phi)>1.
\]
The curve $u$ is called {\bf simple} if it is not multiply covered.

\begin{defi}\label{defreg}
An almost complex structure $J$ on $V$ is called regular at the situation $(\alpha,\Sigma_0,\Sigma_{\infty})$,
if for every $u\in H^{2,2}(S^2,V)$ which satisfies condition $(\ref{eqh})$ $D_u$ is onto.
In particular for a regular $J$ the set $\hl(\alpha,J,\Sigma_0,\Sigma_{\infty})$ is a smooth $S^1$-manifold.
\end{defi}

{\bf Remark}: By elliptic regularity theory every $u\in H^{2,2}(S^2,V)$ which satisfies condition $(\ref{eqh})$ is smooth.

There is a regularity criterion in \cite{ms2004} which is very important for us.
\begin{lem}[Lemma $3.3.2$ in \cite{ms2004}]\label{lem322}
Let $E\to S^2$ be a complex vector bundle of rank $n$ and
\[
D:\Omega^0(S^2,E)\to\Omega^{0,1}(S^2,E)
\]
be a real linear Cauchy-Riemann operator. Suppose that there exists a splitting $E=L_1\oplus\cdot\cdot\cdot\oplus L_n$
into complex line bundles such that each subbundle $L_1\oplus\cdot\cdot\cdot\oplus L_k$, $k=1,...,n$, is invariant under $D$.
Then $D$ is surjective if and only if $c_1(L_k)\geqslant-1$ for every $k$.
\end{lem}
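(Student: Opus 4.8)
The plan is to reduce the rank-$n$ statement to the case of a single line bundle and then induct along the given flag of invariant subbundles. Write $F_k=L_1\oplus\cdots\oplus L_k$, so that the hypothesis is exactly that each $F_k$ is preserved by $D$.

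The key input, and the step I expect to be the main analytic obstacle, is the base case $n=1$. For a complex line bundle $L\to S^2$ a real linear Cauchy--Riemann operator $D_L$ is Fredholm, and by Riemann--Roch its real index is $\mathrm{ind}\,D_L=2c_1(L)+2$. What I really need is the sharp dichotomy: $D_L$ is \emph{surjective} when $c_1(L)\geqslant-1$ and \emph{injective} when $c_1(L)\leqslant-1$ (so that for $c_1(L)\leqslant-2$ it has a genuine cokernel). I would obtain this either by citing the line-bundle version that precedes Lemma~$\ref{lem322}$ in \cite{ms2004}, or by proving it directly: the space of such operators is affine over $\Omega^{0,1}(\mathrm{End}_{\mathbb R}L)$, so one deforms $D_L$ to the standard $\bar\partial$ on $\mathcal{O}(c_1(L))$ while keeping the kernel and cokernel dimensions under control, and then reads off $H^0$ and $H^1$ of $\mathcal{O}(d)$ on $\mathbb{CP}^1$. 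The vanishing of the kernel for negative $c_1$ is the delicate half and is where the $S^2$-geometry (genus zero) is essential.

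For the inductive step I use the invariant subbundle $F_{n-1}$. Invariance means $D$ preserves $\Omega^0(F_{n-1})$, hence restricts to a Cauchy--Riemann operator $D'$ on $F_{n-1}$ and descends to a Cauchy--Riemann operator $\bar D_n$ on the quotient line bundle $L_n=E/F_{n-1}$. The short exact sequence $0\to F_{n-1}\to E\to L_n\to 0$ then gives the six-term exact sequence
\[
0\to\ker D'\to\ker D\to\ker\bar D_n\xrightarrow{\ \delta\ }\mathrm{coker}\,D'\to\mathrm{coker}\,D\to\mathrm{coker}\,\bar D_n\to 0 .
\]
For the ``if'' direction, assume $c_1(L_k)\geqslant-1$ for all $k$. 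The flag $F_1\subset\cdots\subset F_{n-1}$ is invariant under $D'$, so by the induction hypothesis $D'$ is surjective, i.e. $\mathrm{coker}\,D'=0$; by the base case $\bar D_n$ is surjective, i.e. $\mathrm{coker}\,\bar D_n=0$. Exactness of the sequence then forces $\mathrm{coker}\,D=0$, so $D$ is surjective. This direction is clean and is the one I would write out in full.

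The converse is the hard part. If $D$ is surjective then $\mathrm{coker}\,\bar D_n$, being a quotient of $\mathrm{coker}\,D=0$, vanishes, so $\bar D_n$ is surjective and $c_1(L_n)\geqslant-1$; running the same argument on the quotient operators attached to $E\to E/F_k$ and inducting handles the top pieces of the flag. The genuinely delicate point is to control the line subbundles sitting \emph{low} in the flag, where exactness alone governs only the top quotient and the off-diagonal (zeroth-order) coupling between summands must be shown not to create spurious surjectivity. The route I would take is to pass to the formal $L^2$-adjoint $D^{*}$, using that $D$ is surjective if and only if $D^{*}$ is injective: the operator $D^{*}$ is triangular with respect to the dual flag $F_{n-1}^{\perp}\subset\cdots\subset E^{*}$, its subquotient operators are the adjoints $\bar D_k^{*}$ acting on the twisted duals $L_k^{*}\otimes\Lambda^{0,1}$ with $c_1=-2-c_1(L_k)$, and injectivity descends to these pieces. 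Combining this with the index identity $\mathrm{ind}\,D=\sum_k\bigl(2c_1(L_k)+2\bigr)$ and the injectivity half of the base case is what rules out any $c_1(L_k)\leqslant-2$; making this reduction to the subquotients airtight is the step I expect to require the most care.
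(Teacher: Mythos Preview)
The paper does not give its own proof of this lemma: it is cited from \cite{ms2004} and followed only by two remarks fixing notation, so there is no argument in the paper to compare your proposal against.

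On the substance of your attempt: the ``if'' direction via induction along the flag and the six-term kernel/cokernel sequence is correct and is the standard proof. Your hesitation about the converse is justified, and the adjoint strategy you outline has a genuine gap --- injectivity of a block-triangular operator does not descend to its diagonal blocks, so injectivity of $D^{*}$ on the dual flag does not force injectivity of each $\bar D_k^{*}$. In fact the ``only if'' direction is false as written. Take $L_1=\mathcal O(-2)$, $L_2=\mathcal O(2)$ on $S^2$ and $D=\bar\partial+\bigl(\begin{smallmatrix}0&b\\0&0\end{smallmatrix}\bigr)$ with $b\in\Omega^{0,1}(\mathrm{Hom}(L_2,L_1))$ generic; then $F_1=L_1$ is $D$-invariant, the connecting map $H^0(\mathcal O(2))\to H^1(\mathcal O(-2))$ in your exact sequence is onto, and hence $\mathrm{coker}\,D=0$ even though $c_1(L_1)=-2$. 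Equivalently, a generic holomorphic extension of $\mathcal O(2)$ by $\mathcal O(-2)$ is isomorphic to $\mathcal O\oplus\mathcal O$, which has vanishing $H^1$. What the paper actually needs in Proposition~\ref{p4dj} is only the sufficiency direction together with the rank-one converse (surjectivity of the induced operator on the normal line bundle forces $c_1(L_1)\geq-1$), and those pieces are correct.
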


{\bf Remark}: $\Omega^0(S^2,E)$ denotes the space of all smooth vector fields $\xi(z)\in E_z$.
$\Omega^{0,1}(S^2,E)$ denotes the space of smooth $J$-antilinear $1$-forms on $S^2$ with values in $E$.
Let $\pi_k:E\to L_k$ denote the projection onto the $k$th summand. Then the subbundle $L_1\oplus\cdot\cdot\cdot\oplus L_k$
is invariant under $D$ means that if $i>k$, $\pi_i(D\xi_j)=0$,$\forall\xi_j\in\Omega^0(S^2,L_j),j=1,...,k$.
Here and throughout this section we identify the first Chern class $c_1(L)$ of $L$ with the corresponding Chern number
$\langle c_1(L),[S^2]\rangle$.

{\bf Remark}: The operator $D_u$ is obviously a real linear Cauchy-Riemann operator.

Using Lemma $\ref{lem322}$, we can give a sufficient condition which guarantees a product regular almost complex structure is still regular.
First, we will introduce some notations.
The number of all self-intersections of a curve $u$ will be denoted by
\[
\delta(u):=\frac{1}{2}\#\{(z_0,z_1)\in\Sigma\times\Sigma|z_0\neq z_1,u(z_0)=u(z_1)\}.
\]
We denote by $c_1(A)=\langle c_1(TM),A\rangle$ for $A\in H_2(M;\zb)$, where $c_1(TM)$ is the first Chern class of $TM$,
by $A_0\cdot A_1$ the intersection number of two classes $A_0$ and $A_1$,
and by $\chi(\Sigma)$ the Euler characteristic of a closed Riemann surface $\Sigma$.

\begin{lem}[adjunction inequality in \cite{ms2004}]
Let $(M,J)$ be an almost complex $4$-manifold and $A\in H_2(M;\zb)$ be a homology class that is represented
by a simple $J$-holomorphic curve $u:\Sigma\to M$.
Then
\[
2\delta(u)-\chi(\Sigma)\leqslant A\cdot A-c_1(A).
\]
with equality if and only if $u$ is an immersion with only transverse self-intersections
(i.e. if $z_0\neq z_1$ and $u(z_0)=u(z_1)=:x$, then $T_xM=im du(z_0)\oplus im du(z_1)$).
\end{lem}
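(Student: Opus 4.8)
The plan is to derive the inequality from the classical adjunction \emph{equality} for pseudo-holomorphic immersions, and then to account for the remaining singularities by nonnegative local corrections supplied by the local structure theory of $J$-holomorphic curves.

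First I would treat the case in which $u$ is an immersion with only transverse self-intersections. Here the normal bundle $N_u$ is a genuine complex line bundle over $\Sigma$, and the pullback tangent bundle splits $J$-linearly as $u^*TM\cong T\Sigma\oplus N_u$. Taking Chern numbers gives $c_1(A)=c_1(T\Sigma)+c_1(N_u)=\chi(\Sigma)+e(N_u)$, where $e(N_u)$ is the Euler number of $N_u$. To compute the homological self-intersection I would push $u$ off itself along a generic section of $N_u$: the zeros of that section contribute $e(N_u)$, while near each geometric double point the two branches are pushed apart into two transverse intersection points, each positive by positivity of intersections, contributing $2\delta(u)$ in total. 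Hence $A\cdot A=e(N_u)+2\delta(u)$, and subtracting the two identities yields the equality $A\cdot A-c_1(A)=2\delta(u)-\chi(\Sigma)$.

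Next I would drop the immersion hypothesis. Since $u$ is simple, it has only finitely many critical points and non-transverse self-intersections, and by the Micallef--White local normal form $u$ agrees near each such point, up to a $C^1$ change of coordinates, with an honest holomorphic curve. To every singular point $p$ I would then assign the nonnegative integer $m_p$ equal to the number of transverse double points produced by a local pseudo-holomorphic perturbation resolving the branching or tangency; positivity of intersections guarantees $m_p\geqslant 0$, and $m_p=0$ exactly when $u$ is already an immersion meeting itself transversally at $p$. Carrying out all these resolutions simultaneously produces a pseudo-holomorphic immersion $u_\varepsilon$ in the class $A$ with only transverse self-intersections and with $\delta(u_\varepsilon)=\delta(u)+\sum_p m_p$. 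Applying the model-case equality to $u_\varepsilon$ gives $A\cdot A-c_1(A)=2\delta(u_\varepsilon)-\chi(\Sigma)=2\delta(u)-\chi(\Sigma)+2\sum_p m_p\geqslant 2\delta(u)-\chi(\Sigma)$, which is the asserted inequality; equality holds precisely when every $m_p$ vanishes, that is, when $u$ is an immersion with only transverse self-intersections.

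The main obstacle is the local analysis at the singular points: one must know that the singularities and tangencies of a $J$-holomorphic curve behave like those of a genuine holomorphic curve, so that the local corrections $m_p$ are well-defined nonnegative integers vanishing only in the transverse immersed case. This is exactly the role of the Micallef--White normal form together with positivity of intersections, and it is on these two analytic inputs that the clean subtraction rests; by contrast, the bookkeeping of Chern and Euler numbers and the push-off computation of $A\cdot A$ are routine once the model case is in place.
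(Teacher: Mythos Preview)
The paper does not prove this lemma at all; it is quoted verbatim from McDuff--Salamon \cite{ms2004} and used as a black box in the proof of Proposition~\ref{p4dj}. So there is no ``paper's own proof'' to compare against.

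That said, your outline is the standard argument one finds in \cite{ms2004}, and it is essentially correct. One point deserves a word of caution: the perturbed map $u_\varepsilon$ you produce by locally resolving the singularities is in general \emph{not} $J$-holomorphic, so you cannot invoke positivity of intersections for $u_\varepsilon$ directly. What saves the day is that the Micallef--White normal form lets you perform each local resolution as a genuine \emph{holomorphic} perturbation in the local model, where the resulting double points are automatically positive; transporting back through the $C^1$ coordinate change preserves the sign of each transverse intersection. Thus the equality $A\cdot A=e(N_{u_\varepsilon})+2\delta(u_\varepsilon)$ still holds even though $u_\varepsilon$ is only an immersion, because every self-intersection it has is positive by construction. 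With that clarification your two-step reduction (immersed transverse case plus nonnegative local corrections $m_p$) is exactly the proof in the cited reference.
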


For the $4$-manifolds, we have the following Proposition.
\begin{prop}\label{p4dj}
Let $(P_1,\omega_1)$ be a symplectic $4$-manifold and $(P_2,\omega_2)$ a symplectic manifold.
Assume $\alpha_1\in[S^2,P_1]$ is an $\omega_1$-minimal free homotopy class which
can be represented by an embedded $J_1$-holomorphic sphere $u$ such that
$$
0<\langle\omega_1,\alpha_1\rangle\leqslant m(P_2,\omega_2).
$$
$\Sigma^1_0,\Sigma^1_{\infty}$ are two disjoint nonempty compact submanifolds of $P_1$.
$\Sigma^2_0$ is a nonempty compact submanifold of $P_2$.
Let $\Sigma_0=\Sigma^1_0\times\Sigma^2_0$, $\Sigma_{\infty}=\Sigma^1_{\infty}\times P_2$ and $\alpha\in[S^2,V:z\mapsto(\alpha_1(z),p_0)]$,
where $p_0\in\Sigma^2_0$.
If $J_1$ is regular at the situation $(\alpha_1,\Sigma_0^1,\Sigma_{\infty}^1)$ in $P_1$ and $J_2\in\fl(P_2,\omega_2)$,
then the product almost complex structure $J=J_1\times J_2$ is regular at the situation $(\alpha,\Sigma_0,\Sigma_{\infty})$ in $P_1\times P_2$.
\end{prop}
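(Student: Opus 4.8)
The plan is to verify Definition $\ref{defreg}$ directly: I fix an arbitrary $u\in H^{2,2}(S^2,P_1\times P_2)$ satisfying $(\ref{eqh})$ in the class $\alpha$ and prove that its linearization $D_u$ is surjective. Because $J=J_1\times J_2$ is a product, $\bar\partial_J u=0$ is equivalent to $u=(u_1,u_2)$ with $u_1$ being $J_1$-holomorphic and $u_2$ being $J_2$-holomorphic, and projecting $[u]=\alpha$ gives $[u_1]=\alpha_1$, $[u_2]=0$. The first step is to show $u_2$ is constant. Writing $E(u_i)=\int_{S^2}u_i^*\omega_i\geqslant 0$, the energy identity reads $E(u_1)+E(u_2)=\langle\omega,\alpha\rangle=\langle\omega_1,\alpha_1\rangle$; if $u_2$ were nonconstant then $E(u_2)\geqslant m(P_2,\omega_2)\geqslant\langle\omega_1,\alpha_1\rangle$ would force $E(u_1)=0$, making $u_1$ constant and $[u_1]=0\neq\alpha_1$, a contradiction. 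Hence $u_2\equiv p$ for some $p\in P_2$, and $u(0)\in\Sigma_0=\Sigma^1_0\times\Sigma^2_0$ gives $p\in\Sigma^2_0$. Moreover $u_1$ is simple: if $u_1=u_1'\circ\phi$ with $\deg\phi=d>1$ then $\langle\omega_1,\alpha_1\rangle=d\langle\omega_1,[u_1']\rangle\geqslant d\,m(P_1,\omega_1)=d\langle\omega_1,\alpha_1\rangle$ by $\omega_1$-minimality, which is absurd.

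Next I would check that $u_1$ itself satisfies $(\ref{eqh})$ for the situation $(\alpha_1,\Sigma^1_0,\Sigma^1_{\infty})$ in $P_1$: from $\Sigma_0=\Sigma^1_0\times\Sigma^2_0$ and $\Sigma_{\infty}=\Sigma^1_{\infty}\times P_2$ we read off $u_1(0)\in\Sigma^1_0$ and $u_1(\infty)\in\Sigma^1_{\infty}$, while $u_2$ being constant makes $u_2^*\omega_2=0$, so $\int_{|z|\leqslant 1}u_1^*\omega_1=\int_{|z|\leqslant 1}u^*\omega=\frac{1}{2}\langle\omega,\alpha\rangle=\frac{1}{2}\langle\omega_1,\alpha_1\rangle$. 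The heart of the proof is then the claim that, since $u_2$ is constant, the linearization decouples as $D_u=D_{u_1}\oplus D^0_{u_2}$ on the splitting $u^*T(P_1\times P_2)=u_1^*TP_1\oplus(S^2\times T_pP_2)$, where $D_{u_1}$ is the linearization of $\bar\partial_{J_1}$ at $u_1$ and $D^0_{u_2}$ is the constant-coefficient $\bar\partial$ on the trivial bundle $S^2\times T_pP_2$. This follows term by term from $(\ref{edu})$: for the product metric $\omega(\cdot,J\cdot)$ the Levi-Civita connection is the product connection, so $J(u)$, $\nabla$ and $(\nabla_\xi J)(u)$ are all block diagonal; and the zeroth-order term $J(u)(\nabla_\xi J)(u)\partial_J(u)$ has vanishing $P_2$-component because $\partial_{J_2}(u_2)=0$. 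Thus no cross terms survive.

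It then remains to show each summand is onto. For $D_{u_1}$: since $J_1$ is regular at $(\alpha_1,\Sigma^1_0,\Sigma^1_{\infty})$ and $u_1$ satisfies $(\ref{eqh})$, $D_{u_1}$ is surjective by hypothesis. Equivalently, one can argue through Lemma $\ref{lem322}$: being simple in a minimal class, $u_1$ is embedded by the adjunction inequality, so $u_1^*TP_1=Tu_1\oplus N$ with $Tu_1$ invariant under $D_{u_1}$, $c_1(Tu_1)=\chi(S^2)=2$ and $c_1(N)=\alpha_1\cdot\alpha_1$; surjectivity of $D_{u_1}$ is then equivalent to $c_1(N)\geqslant-1$, which is exactly what regularity of $J_1$ supplies. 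For $D^0_{u_2}$: the bundle $S^2\times T_pP_2$ is trivial, hence a sum of trivial complex line bundles each with $c_1=0\geqslant-1$, so Lemma $\ref{lem322}$ gives surjectivity. A direct sum of surjective operators is surjective, so $D_u$ is onto; as $u$ was arbitrary, $J=J_1\times J_2$ is regular at $(\alpha,\Sigma_0,\Sigma_{\infty})$.

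The step I expect to be the main obstacle is the decoupling $D_u=D_{u_1}\oplus D^0_{u_2}$. One must confirm carefully that the Levi-Civita connection of the product metric really is the product connection and that every term of $(\ref{edu})$ respects the splitting once $u_2$ is constant, so that the nonlinearity introduces no off-diagonal coupling between $u_1^*TP_1$ and $S^2\times T_pP_2$. The remaining ingredients---the energy comparison forcing $u_2$ to be constant, and the two surjectivity statements (from the regularity of $J_1$ and from Lemma $\ref{lem322}$ on the trivial factor)---are comparatively routine.
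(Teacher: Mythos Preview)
Your proof is correct and follows the same overall architecture as the paper's: reduce to $u_2\equiv p$, use the product structure of $(\omega,J,g)$ to see that $D_u$ respects the splitting $u^*T(P_1\times P_2)=u_1^*TP_1\oplus(S^2\times T_pP_2)$, and conclude surjectivity. The one place you diverge is in how you dispatch the $P_1$-block. The paper does not stop at the block decomposition: it further splits $u_1^*TP_1=L_0\oplus L_1$ with $L_0=\mathrm{im}(du_1)$, uses the adjunction inequality (together with the hypothesis that $\alpha_1$ admits an embedded representative) to show that \emph{every} $J_1$-holomorphic $\alpha_1$-sphere is embedded so that $L_0$ is a genuine line subbundle, extracts $c_1(L_0),c_1(L_1)\geqslant-1$ from the regularity of $J_1$ via the ``only if'' direction of Lemma~\ref{lem322}, and then applies Lemma~\ref{lem322} to the full flag $L_0\subset L_0\oplus L_1\subset L_0\oplus L_1\oplus L_2\subset\cdots$ on the product. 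You instead invoke the regularity hypothesis on $J_1$ directly to get $D_{u_1}$ onto, and apply Lemma~\ref{lem322} only to the trivial $P_2$-factor; the embeddedness of $u_1$ then appears only in your optional ``equivalently'' remark and is not needed for your main line. Both routes are valid; yours is more economical and in fact shows that the ``embedded representative'' hypothesis is not needed for this particular proposition, while the paper's route makes the $4$-dimensional input via adjunction explicit. One small caution on your aside: ``simple in a minimal class'' alone does not give embeddedness from adjunction---you need the existence of \emph{some} embedded representative to pin down $A\cdot A-c_1(A)=-2$ first, exactly as in the paper.
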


\begin{proof}
First it is easy to see every $J_1$-holomorphic sphere $u$ which represents $\alpha_1$ is simple.
In fact, if $u$ is multiply covered there exists a $J_1$-holomorphic sphere $u':S^2\to P_1$,
and a holomorphic branched covering $\phi:S^2\to S^2$ such that
\[
u=u'\circ\phi,~deg(\phi)=k>1.
\]
Evidently $\langle\omega_1,[u']\rangle=\frac{1}{k}\langle\omega_1,\alpha_1\rangle$ since $\alpha_1=[u]$. Hence
$$
\langle\omega_1,\alpha_1\rangle\leqslant m(P_1,\omega_1,J_1)\leqslant\frac{1}{k}\langle\omega_1,\alpha_1\rangle,
$$
giving a contradiction to our assumption that $\alpha_1$ is $\omega_1$-minimal.
Assume $u$ represents the homology class $A\in H^2(P_1;\zb)$, i.e. $u_*([S^2])=A$.
Then all the $J_1$-holomorphic spheres represent $\alpha_1$ will represent $A$.

Since $A\in H^2(P_1;\zb)$ is represented by an embedded $J_1$-holomorphic sphere $u$ which is also simple, by the adjunction inequality we can get
\[
-2=A\cdot A-c_1(A).
\]
For every simple $J_1$-holomorphic sphere $v:S^2\to P_1$ which represents $A$, we have
\begin{align*}
2\delta(v)-2&\leqslant A\cdot A-c_1(A),\\
2\delta(v)&\leqslant0,\\
\delta(v)&=0.
\end{align*}
The equality of the adjunction inequality holds for $v$.
Thus every simple $J_1$-holomorphic sphere $v$ which represents $A$ is an embedded curve.
We can get every $J_1$-holomorphic sphere represents $\alpha_1$ is an embedded curve.

Assume $\tilde u\in H^{2,2}(S^2,P_1\times P_2)$ and $\tilde u$ satisfies
$$
[\tilde u]=\alpha,~\tilde u(*)\in\Sigma_*,~*\in\{0,\infty\},~
\int_{|z|\leqslant1}\tilde u^*\omega=\frac{1}{2}\langle\omega,\alpha\rangle,~\bar\partial_J\tilde u=0,
$$
where $\omega=\omega_1\oplus\omega_2$.
The $J$-holomorphic $\alpha$ sphere has the form $\tilde u(z)=(u(z),p_0)$, where $u\in H^{2,2}(S^2,P_1)$ and satisfies
$$
[u]=\alpha_1,~u(*)\in\Sigma_*^1,~*\in\{0,\infty\},~\int_{|z|\leqslant1}u^*\omega_1=\frac{1}{2}\langle\omega_1,\alpha_1\rangle,~\bar\partial_{J_1}u=0.
$$
We have the splitting
\baln
\tilde u^*T(P_1\times P_2)&=u^*TP_1\oplus(S^2\times T_{p_0}P_2)\\
&=u^*TP_1\oplus L_2\oplus...\oplus L_{n+1}.
\end{align*}
It follows from the definition of $D_u$ $(\ref{edu})$ that
\[
D_u(du\circ\zeta)=du\circ\bar\partial_j\zeta
\]
for every vector field $\zeta\in \text{Vect}(S^2)$. For the embedded curve $u$, the complex subbundle
\[
L_0:=im(du)\subset u^*TP_1
\]
is invariant under $D_u$.
Now let $L_1\subset u^*TP_1$ be the orthogonal complement of $L_0$ with respect to any Hermitian inner product of $u^*TP_1$.
Then by Lemma $\ref{lem322}$
\[
u^*TP_1=L_0\oplus L_1,~ c_1(L_0)\geqslant-1,~ c_1(L_1)\geqslant-1,
\]
because $J_1$ is regular at the situation $(\alpha_1,\Sigma_0^1,\Sigma_{\infty}^1)$ in $P_1$.
In the product manifold $(P_1\times P_2,\omega_1\oplus\omega_2)$, $\omega_1(\cdot,J_1\cdot)+\omega_2(\cdot,J_2\cdot)$
defines a product metric on $P_1\times P_2$. Let $\nabla$ be the Levi-Civita connection on $P_1\times P_2$ and
$\nabla^i$ the Levi-Civita connection on $P_i$, $i=1,2$, respectively.
By the relation between $\nabla$ and $\nabla^i$, $i=1,2$,
we know in the product manifold $P_1\times P_2$
\[
D_{\tilde u}\xi_j=D_u\xi_j,~\forall\xi_j\in\Omega^0(S^2,L_j), j=0,1.
\]
Thus the subbundles $L_0$, $L_0\oplus L_1$, are invariant under $D_{\tilde u}$ too.
In the trivial bundle $S^2\times T_{p_0}P_2$, each subbundle $L_{2}\oplus...\oplus L_{1+j}$, $j=1,...,n$, is obviously invariant under $D_{\tilde u}$.
$c_1(L_j)\geqslant-1$, $j=2,...,n+1$. By Lemma $\ref{lem322}$ again, we know $D_{\tilde u}$ is surjective.
\end{proof}

{\bf Remark}: From the arguments of Lemma $3.3.3$, Corollary $3.3.4$ and Corollary $3.3.5$ in \cite{ms2004}, we can get the above Proposition easily.

\section{Holomorphic Spheres}\label{defgb}

Let us recall the definition of geometrically bounded manifold (cf.\cite{alp1994}, \cite{g1985}, \cite{lu1998}).

\begin{defi}\label{dgb}
Let $(M,\omega)$ be a symplectic manifold without boundary.
we will call it geometrically bounded if there exists an almost complex structure $J$
and a complete Riemannian metric $g$ on $M$ such that the following properties are satisfied:\\
$1.$ $J$ is uniformly tamed by $\omega$; that is, there exist strictly positive constants $\alpha$ and $\beta$ such that
\[
\omega(X,JX)\geqslant\alpha\parallel X\parallel_g^2~\text{and}~|\omega(X,Y)|\leqslant\beta\parallel X\parallel_g\parallel Y\parallel_g
\]
for all $X,Y\in TM$;\\
$2.$ the sectional curvature $K_g\leqslant C$(a positive constant) and the injectivity radius $i(M,g)>0$.
\end{defi}

\begin{defi}[Definition $2.4$ in \cite{lu1998}]
In Definition $\ref{dgb}$ if we require $J\in\fl(M,\omega)$,
then the symplectic manifold $(M,\omega)$ is called strongly geometrically bounded (SGB).
\end{defi}

It is well known that the closed symplectic manifolds are SGB and a product of two SGB symplectic manifolds is SGB.
It is easy to prove the symplectic manifolds which at infinity are
isomorphic to the symplectization of a closed contact manifold are SGB (cf.\cite{cgk2004}).
The standard cotangent bundles as well as the twisted cotangent bundles over closed manifolds are SGB
(\text{cf.} \cite{cgk2004}, \cite{lu1998}).

Let $(P_1,\omega_1,J_1,g_1)$, $(P_2,\omega_2,J_2,g_2)$ be two SGB symplectic manifolds such that dim$P_1=4$.
$V=P_1\times P_2$, $\omega=\omega_1\oplus\omega_2$, $J=J_1\times J_2$, $g=g_1\oplus g_2$.
Then $(V,\omega,J,g)$ is a SGB symplectic manifold.
Assume $m(V,\omega,J)<\infty$, and let $\alpha\in[S^2,V:z\mapsto(\alpha_1(z),p_0)]$, $p_0\in\Sigma_0^2$, be a free homotopy class
which is defined in Proposition $\ref{p4dj}$ such that
\begin{equation}\label{eqhomology}
\langle\omega,\alpha\rangle=m(V,\omega,J).
\end{equation}
From the definition of $m(V,\omega,J)$, we can get that a $J$-holomorphic sphere which represents $\alpha$ is simple.

Consider the Banach manifold $\bl$ consisting of all maps $u\in H^{2,2}(S^2,V)$ such that with $D=\{z||z|\leqslant1\}$
\[
[u]=\alpha,~u(*)\in\Sigma_*,~*\in\{0,\infty\},~\int_Du^*\omega=\frac{1}{2}\langle\omega,\alpha\rangle,
\]
where $\Sigma_0,\Sigma_{\infty}$ are two disjoint smooth submanifolds without boundary of $V$ and closed as subsets in $V$.
We also assume that one of $\Sigma_0$ and $\Sigma_\infty$ is compact.
Denote by $\bar{X}_J\to S^2\times V$ the vector bundle whose fiber over $(z,v)\in S^2\times V$
consists of all linear maps $\phi:T_zS^2\to T_vV$ such that $J(v)\phi=-\phi\circ i$.
Given $u:S^2\to V$ we denote by $\bar u:S^2\to S^2\times V$ the "graph map" $\bar u(z)=(z,u(z))$ and
write $\bar u^*\bar{X}_J\to S^2$ for the pull back bundle. Let $\el$ be the Banach bundle $\el\to\bl$
whose fiber $\el_u=H^{1,2}(\bar u^*\bar X_J)$ at $u\in H^{2,2}(S^2,V)$ consists of all $H^{1,2}$ sections of $\bar u^*\bar X_J\to S^2$.
The nonlinear Cauchy Riemann operator $\bar{\partial}_J$, $\bar{\partial}_Ju=du+J\circ du\circ i$,
can be considered as a smooth section of $\el\to\bl$, and its zero set is $\hl(\alpha,J,\Sigma_0,\Sigma_{\infty})$.
By elliptic regularity theory every $u\in\bl$ with $\bar\partial_Ju=0$ is smooth.
H.Hofer and C.Viterbo proved some propositions-Propositions $2.3$, $2.4$ and $2.7$ in \cite{hv1992}-
for the compact manifold $V$ which guaranteed the d-index was well defined and made the existence of closed orbit possible.
Lu proved a prior compactness property (Proposition $2.5$ in \cite{lu1998})
for the SGB symplectic manifold. Utilizing the prior compactness and the assumption $(\ref{eqkcpt})$, Lu \cite{lu1998} showed
the Propositions $2.3$ and $2.4$ in \cite{hv1992} also held true for the case of SGB symplectic manifold
if the neighborhood $U_J$ and $\fl_{reg}(V,\omega)\cap U_J$ of $J$ in these Propositions were replaced by
$\ul(J,\delta,f_{r_0})$ and $\fl_{reg}(V,\omega)\cap\ul(J,\delta,f_{r_0})$.
The definition of $\ul(J,\delta,f_{r_0})$ is given in \cite{lu1998}. In the following, $\ul(J,\delta,f_{r_0})$ is abbreviated to $\ul$.
So the d-index $d(\alpha, J,\Sigma_0,\Sigma_{\infty}):=[\hl(\alpha,\tilde J,\Sigma_0,\Sigma_{\infty})]$
is well defined in the SGB symplectic manifold.

\begin{prop}
Let $(V,\omega)$ be a SGB symplectic manifold, $J\in\fl(V,\omega)$, $m(V,\omega,J)=\langle\omega,\alpha\rangle$,
Let $\Sigma_0,\Sigma_\infty$ be described above, then there exists an open neighborhood $\ul$ of $J$ such that \\
$(1)$ For all $\tilde J\in\fl_{reg}(V,\omega)\cap\ul$, the set $\hl(\alpha,\tilde J,\Sigma_0,\Sigma_{\infty})$ is a compact smooth $S^1$-manifold.\\
$(2)$ $\fl_{reg}(V,\omega)\cap\ul$ is dense in $\ul$.\\
$(3)$ Let $J_0'$, and $J_1'$ be close to $J$ in $\ul$, and $J_0', J_1'\in\fl_{reg}(V,\omega)$.
Suppose $\lambda\to J_\lambda'$ is a smooth homotopy with $\lambda\in[0,1]$ and $J_\lambda'\in\ul$.
Then there exists a smooth arbitrarily small perturbation of $[\lambda\to J_\lambda']$
with the end points fixed, say $[\lambda\to\tilde J_\lambda]$, such that
\[
\ml:=\{(\lambda,u)\in[0,1]\times\bl|\bar\partial_{\tilde J_\lambda}u=0\}
\]
is a compact $S^1$-manifold with boundary
\[
\partial\ml=\hl(\alpha, J_0',\Sigma_0,\Sigma_{\infty})\amalg\hl(\alpha, J_1',\Sigma_0,\Sigma_{\infty}).
\]
\end{prop}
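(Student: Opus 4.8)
The plan is to assemble the usual transversality--compactness package for $J$-holomorphic spheres, adapted to the SGB setting through Lu's a priori estimates, and to verify the three assertions in turn. Throughout, the key structural input is that $\alpha$ is $\omega$-minimal: since $\langle\omega,\alpha\rangle=m(V,\omega)$ is the smallest positive symplectic area of any class, the area bound is topological and hence independent of the chosen almost complex structure, which will control both bubbling and simplicity uniformly.

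For $(1)$ I would argue as follows. If $\tilde J\in\fl_{reg}(V,\omega)\cap\ul$, then by Definition $\ref{defreg}$ the operator $D_u$ is onto at every $u\in\hl(\alpha,\tilde J,\Sigma_0,\Sigma_{\infty})$, so the implicit function theorem on the Banach manifold $\bl$ exhibits the zero set of $\bar\partial_{\tilde J}$ as a smooth finite-dimensional submanifold. The rotations $z\mapsto e^{i\theta}z$ of $S^2=\cb\cup\{\infty\}$ fix $0$ and $\infty$ and preserve the disc $\{|z|\leqslant1\}$, hence preserve every constraint in $(\ref{eqh})$; this yields a smooth $S^1$-action, and it is free because $\alpha$ being $\omega$-minimal forces every $\tilde J$-holomorphic representative to be simple (by the area argument of Proposition $\ref{p4dj}$, recorded after $(\ref{eqhomology})$), so no nonconstant sphere can be invariant under a nontrivial rotation. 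The substantive point is compactness: minimal area prevents Gromov compactness from splitting off a bubble---any bubble would carry area at least $m(V,\omega)$, leaving the principal component with strictly less than the required area---so a sequence in $\hl$ converges, up to reparametrization, to a single smooth sphere in class $\alpha$, and the fixed marked points together with the energy-splitting condition rigidify the reparametrization up to $S^1$. In the noncompact $V$ the only remaining danger is escape of the curves to infinity, and this is ruled out by Lu's a priori compactness (Proposition $2.5$ in \cite{lu1998}), which together with the uniform SGB geometry and $(\ref{eqkcpt})$ confines all such spheres to a fixed compact set.

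For $(2)$ I would form the universal moduli space of pairs $(u,\tilde J)$ with $\tilde J\in\ul$ satisfying $\bar\partial_{\tilde J}u=0$ and the constraints of $\bl$, and show that its universal linearization is surjective. Surjectivity follows from the existence of a somewhere-injective point of $u$---again guaranteed by simplicity---where a variation of $\tilde J$ supported near $u(z)$ realizes an arbitrary cokernel element. Hence the universal space is a Banach manifold, the projection to $\ul$ is Fredholm, and the Sard--Smale theorem identifies its regular values, namely $\fl_{reg}(V,\omega)\cap\ul$, as a set of the second category, therefore dense in $\ul$. For $(3)$ I would run the same package with one extra parameter: applying Sard--Smale to the parametrized universal space produces an arbitrarily small perturbation $[\lambda\to\tilde J_\lambda]$ of the path, with endpoints fixed, along which the linearized operator is surjective, so that $\ml$ is a smooth $S^1$-manifold whose boundary is the fiber over $\lambda\in\{0,1\}$, i.e. $\hl(\alpha,J_0',\Sigma_0,\Sigma_{\infty})\amalg\hl(\alpha,J_1',\Sigma_0,\Sigma_{\infty})$; compactness of $\ml$ is obtained exactly as in $(1)$, minimal area blocking bubbling and Lu's estimate confining the curves uniformly in $\lambda$.

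The hard part throughout is precisely this compactness in the noncompact SGB manifold, since ordinary Gromov compactness presumes a compact target; the whole argument hinges on Lu's a priori compactness together with the hypothesis $(\ref{eqkcpt})$ that $K$ is compact and the product form of $\Sigma_0,\Sigma_{\infty}$, after which the Fredholm--Sard--Smale part is the by-now-routine Hofer--Viterbo machinery of \cite{hv1992}. Granting those estimates, the three statements package together into the free $S^1$-cobordism invariance that makes the d-index $d(\alpha,J,\Sigma_0,\Sigma_{\infty})$ well defined.
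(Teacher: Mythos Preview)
Your outline is correct and follows the same route the paper takes: the proposition is not proved in the paper but is recorded as the SGB version of Propositions~2.3--2.4 of \cite{hv1992}, carried over by Lu \cite{lu1998} via exactly the transversality/Sard--Smale package together with the a priori confinement estimate you describe. One small misattribution to flag: the hypothesis $(\ref{eqkcpt})$ and the product structure of $\Sigma_0,\Sigma_\infty$ belong to Theorem~\ref{p4dsgb} and Proposition~\ref{plu31} (where the Hamiltonian $H$ enters), not to the present proposition; here confinement comes from Lu's Proposition~2.5 using only that one of $\Sigma_0,\Sigma_\infty$ is compact, and what makes the estimate uniform over $\tilde J$ is the specific form $\ul=\ul(J,\delta,f_{r_0})$ of the neighborhood, which controls $\tilde J-J$ at infinity.
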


Let $H:V\to\rb$ be a smooth map and $g_J(\cdot,\cdot)=\omega(\cdot,J\cdot)$ the Riemannian metric.
We denote by $\nabla H$ the gradient of $H$ with respect to the metric $g_J$.
For suitable neighborhoods $\ul(\Sigma_0),\ul(\Sigma_{\infty})$ of $\Sigma_0,\Sigma_{\infty}$ respectively,
suppose $H|_{\ul(\Sigma_0)}\equiv h_0$, $H|_{\ul(\Sigma_{\infty})}\equiv h_{\infty}$, $h_0<h_{\infty}$ and $h_0\leqslant H\leqslant h_{\infty}$.
Consider the subset
\[
W=[(S^2\backslash\{0,\infty\})\times V]\cup[\{0\}\times\ul(\Sigma_0)]\cup[\{\infty\}\times\ul(\Sigma_{\infty})].
\]
We define a section $\hat{h}$ of $\bar X_J|_W$ associated to $H$ by
\[
\hat h:W\to\bar X_J,~\hat h(z,v)=:\phi.
\]
Where $\phi$ is the unique complex antilinear map $T_zS^2\to T_vV$ satisfying the following:\\
$1.$ If $z=0$ or $\infty$, $\phi$ is the zero map,\\
$2.$ If $z\neq0$ and $\neq\infty$, $\phi$ maps the tangent vector $z\in T_zS^2=\cb$ to $\frac{1}{2\pi}\nabla H(v)$.
Here we took the identity chart $S^2\supset\cb\simeq\cb$ to distinguish in $T_zS^2$ for $z\in\cb$ the tangent vector $z$.

If $u\in\bl$ then the associated graph map $\bar u$, $\bar u(z)=(z,u(z))$ maps $z\in S^2$ into $W\subset S^2\times V$.
Consequently we can define $h(u)\in\el$ by
\[
h(u)(z)=\hat h(z,u(z)).
\]
Now we define a parameter depending family of smooth section of $\el\to\bl$ by
\[
f_{\lambda}(u)=\bar{\partial}_{J}u+\lambda h(u).
\]
Clearly, $f_\lambda$ is $S^1-$equivalent for every $\lambda$ and $f_{\lambda}$ is a Fredholm section in the sense
that at every zero $u$ of $f_\lambda$ the linearisation
$Df_\lambda:T_u\bl\to\el_u$ is Fredholm. Consider the set
\[
\cl=\{(\lambda,u)\in[0,+\infty)\times\bl|f_\lambda(u)=0\}.
\]
By elliptic regularity theory, $\cl\subset[0,+\infty)\times C^\infty(S^2,V).$ let $\cl_\lambda=\{u|(\lambda,u)\in\cl\}$.
Then $\cl_0$ is a compact smooth manifold with a free smooth $S^1$-action, and $\cl_0=\hl(\alpha,J,\Sigma_0,\Sigma_{\infty})$.
Lu \cite{lu1998} showed that if the manifold $V$ is SGB,
the Proposition $2.7$ in \cite{hv1992} was also true.

\begin{prop}[PROPOSITION $3.1$ in \cite{lu1998}]\label{plu31}
Let $\alpha\in[S^2,V]$, $\Sigma_0,\Sigma_\infty,J$ and $H$ be as above, and let $\cl$ be compact. Then
\[
d(\alpha,J,\Sigma_0,\Sigma_\infty)=[\varnothing],
\]
i.e. $\hl(\alpha,J,\Sigma_0,\Sigma_\infty)$ is the boundary of a smooth compact manifold $\ml$ equipped with a free $S^1$-action,
so that the action on $\partial\ml$ coincides with the action on $\hl$.
\end{prop}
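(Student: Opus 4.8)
The plan is to realize $\hl(\alpha,J,\Sigma_0,\Sigma_\infty)=\cl_0$ as the boundary of the parametrized solution set $\cl$ itself, so that it bounds as a free $S^1$-manifold. Since $\cl$ is assumed compact, the continuous projection $\pi:\cl\to[0,+\infty)$, $(\lambda,u)\mapsto\lambda$, has compact image; writing $\Lambda:=\max\pi(\cl)+1<\infty$ we obtain $\cl_\lambda=\varnothing$ for every $\lambda\geqslant\Lambda$. Thus $\cl\subset[0,\Lambda)\times\bl$, and the only slice of $\cl$ that can fail to be empty at an endpoint of $[0,\Lambda]$ is the one at $\lambda=0$.

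First I would pin down the $\lambda=0$ end. Choosing $\tilde J$ in the neighbourhood $\ul$ that is regular at the situation $(\alpha,\Sigma_0,\Sigma_\infty)$, the slice $\cl_0=\hl(\alpha,\tilde J,\Sigma_0,\Sigma_\infty)$ is a compact smooth free $S^1$-manifold representing $d(\alpha,J,\Sigma_0,\Sigma_\infty)$. I would then apply a parametrized Sard--Smale transversality argument to the $S^1$-equivariant Fredholm family $f_\lambda(u)=\bar\partial_Ju+\lambda h(u)$: after an arbitrarily small equivariant perturbation of the family with the $\lambda=0$ end held fixed, the universal zero set is cut out transversally and $\cl$ becomes a smooth $S^1$-manifold with boundary, of dimension one greater than $\hl$. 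Since the perturbation is small and the unperturbed $\cl$ is compact, the perturbed $\cl$ stays compact and contained in $[0,\Lambda)\times\bl$; and because each $f_\lambda$ is $S^1$-equivariant with zeros that are nonconstant spheres in the nontrivial class $\alpha$, the $S^1$-action on $\cl$ is free and restricts on the boundary to the given action on $\hl$.

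It then remains to read off $\partial\cl$. The perturbed $\cl$, viewed inside $[0,\Lambda]\times\bl$, has boundary consisting of its slices at $\lambda=0$ and $\lambda=\Lambda$; the latter is $\cl_\Lambda=\varnothing$, so $\partial\cl=\cl_0=\hl(\alpha,\tilde J,\Sigma_0,\Sigma_\infty)$. Setting $\ml:=\cl$ thus exhibits $\hl$ as the boundary of a compact smooth manifold equipped with a free $S^1$-action that agrees with the given action on $\partial\ml=\hl$. By definition of the free $S^1$-cobordism class this says precisely $d(\alpha,J,\Sigma_0,\Sigma_\infty)=[\hl(\alpha,\tilde J,\Sigma_0,\Sigma_\infty)]=[\varnothing]$.

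The hard part is the transversality-with-compactness package in the SGB, hence noncompact, target $V$. One must arrange the small perturbation so that the deformed family $\tilde J_\lambda$ stays inside the controlled neighbourhood $\ul=\ul(J,\delta,f_{r_0})$ for all $\lambda$, so that Lu's prior compactness estimate continues to apply uniformly along the whole homotopy and no solutions escape to infinity or bubble off; one must also check that equivariant transversality can be achieved while respecting the fixed $\lambda=0$ boundary condition and without destroying freeness of the $S^1$-action (no constant or multiply covered limits appearing). The compactness hypothesis on $\cl$ is exactly what controls the noncompactness of $V$ here, and the remaining points follow by adapting Propositions $2.3$ and $2.4$ of \cite{hv1992} to the SGB setting as in \cite{lu1998}.
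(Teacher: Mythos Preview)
The paper does not give its own proof of this proposition; it is quoted as Proposition~3.1 of \cite{lu1998}, which in turn is the SGB adaptation of Proposition~2.7 in \cite{hv1992}. Your sketch reconstructs that standard argument correctly: use compactness of $\cl$ to bound the $\lambda$-range, perturb the Fredholm family $f_\lambda$ generically to make the parametrized zero set a smooth compact free $S^1$-manifold with boundary, and observe that the only nonempty boundary slice is at $\lambda=0$, giving $\partial\ml=\hl$.

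One notational inconsistency worth tightening: you first say ``choosing $\tilde J\in\ul$ regular, the slice $\cl_0=\hl(\alpha,\tilde J,\Sigma_0,\Sigma_\infty)$,'' but in the next sentence you write the family as $f_\lambda(u)=\bar\partial_Ju+\lambda h(u)$ with the original $J$ and propose to perturb ``with the $\lambda=0$ end held fixed.'' These are incompatible as stated: $\cl$ is defined with $J$, so the unperturbed $\cl_0$ is $\hl(\alpha,J,\Sigma_0,\Sigma_\infty)$, which need not be a manifold. The correct formulation (and what \cite{hv1992,lu1998} actually do) is that the generic perturbation of the whole family moves the $\lambda=0$ end from $J$ to a nearby regular $\tilde J$; one does not fix that end. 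After this adjustment your outline matches the cited proof.
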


As in \cite{hv1992} and \cite{lu1998}, we have the following Proposition:
\begin{prop}\label{pmain}
Let $(V,\omega)$ be a SGB symplectic manifold. $\Sigma_0$,$\Sigma_\infty$ are described above.
$J\in\fl(V,\omega)$ such that $m(V,\omega,J)\geqslant\langle\omega,\alpha\rangle$, where $\alpha\in[S^2,V]$.
Let $\el\to\bl$ be the Hilbert space bundle defined above. Let $H:V\to\rb$ be a smooth map such that
$$
H|_{\ul(\Sigma_0)}\equiv h_0, H|_{\ul(\Sigma_{\infty})}\equiv h_{\infty},h_0<h_{\infty} \text{ and } h_0\leqslant H\leqslant h_{\infty}.
$$
Let $\cl$ be defined above. Then \\
$(1)$ If $(\lambda,u)\in\cl$, then $\lambda\in[0,\lambda_\infty]$,$\lambda_\infty=(h_\infty-h_0)^{-1}\langle\omega,\alpha\rangle$;\\
$(2)$ For every multi index $\beta$ there is a constant $C_\beta>0$ such that for every $(\lambda,u)\in\cl$,
$v=u\circ\phi$, here $\phi:S^1\times\rb\to\cb,\phi(t,s)=e^{2\pi(s+it)}$.
\[
|(D^\beta v)(x)|\leqslant C_\beta, ~\forall x\in S^1\times\rb
\]
$(3)$ There exists $\varepsilon>0$ such that for every $(\lambda,u)\in\cl$ we have:
if $v(s)(S^1)\not\subset\ul(\Sigma_0)$ then
\[
\int^s_{-\infty}\int^1_0v^*\omega-\lambda\int_0^1H(v(s)(t))dt\geqslant\varepsilon-\lambda h_0.
\]
If $v(s)(S^1)\not\subset\ul(\Sigma_\infty)$ then
\[
\int^s_{-\infty}\int^1_0v^*\omega-\lambda\int_0^1H(v(s)(t))dt\leqslant\langle\omega,\alpha\rangle-\varepsilon-\lambda h_\infty.
\]
\end{prop}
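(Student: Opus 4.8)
The plan is to work in the cylindrical coordinates $\phi(t,s)=e^{2\pi(s+it)}$ on $S^2\setminus\{0,\infty\}$ and study $v=u\circ\phi$. Because $\bar\partial_J$ is conformally invariant and the section $h$ was built from $\nabla H$ with the normalization $\tfrac{1}{2\pi}$ that cancels the factor coming from $dz/dw$, the equation $f_\lambda(u)=0$ becomes in these coordinates the Floer-type equation
\[
\partial_s v+J(v)\,\partial_t v+\lambda\,\nabla H(v)=0 ,
\]
and, since $u(0)\in\Sigma_0$ and $u(\infty)\in\Sigma_\infty$ lie where $H\equiv h_0$ and $H\equiv h_\infty$, one has $\int_0^1 H(v(s,t))\,dt\to h_0$ as $s\to-\infty$ and $\to h_\infty$ as $s\to+\infty$. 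Pairing the equation with $\omega$ and writing $g=\omega(\cdot,J\cdot)$ yields the pointwise identity
\[
v^*\omega=\bigl(|\partial_s v|_g^2+\lambda\,\partial_s(H\circ v)\bigr)\,ds\wedge dt ,
\]
which is the source of all three statements. Integrating it over the whole cylinder and using the asymptotic values gives
\[
\langle\omega,\alpha\rangle=\int_{S^2}u^*\omega=\int_{-\infty}^{+\infty}\!\!\int_0^1|\partial_s v|_g^2\,dt\,ds+\lambda\,(h_\infty-h_0).
\]
As the first term is nonnegative and $\lambda\ge0$ by the definition of $\cl$, this forces $0\le\lambda(h_\infty-h_0)\le\langle\omega,\alpha\rangle$, i.e. $\lambda\in[0,\lambda_\infty]$, proving $(1)$.

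For $(2)$, note first that $(1)$ bounds $\lambda$ and the total energy $\int|\partial_s v|_g^2$ uniformly over $\cl$. The crux is a uniform gradient bound. If $|\nabla v|$ were unbounded along a sequence in $\cl$, a rescaling argument would, since the bounded perturbation term scales away in the limit, produce a nonconstant $J$-holomorphic sphere; its symplectic area is at least $m(V,\omega,J)\ge\langle\omega,\alpha\rangle$, whereas by $(1)$ the energy available is at most $\langle\omega,\alpha\rangle$, a contradiction. That this bubbling analysis is legitimate on the noncompact $V$ rests on its strongly geometrically bounded structure---uniformly tamed $J$, bounded sectional curvature and positive injectivity radius---and on Lu's a priori compactness (Proposition $2.5$ in \cite{lu1998}); this is the technical heart of the argument and the step I expect to be the main obstacle, since ordinary Gromov compactness does not apply directly to a noncompact target. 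Granting the gradient bound, the image of each $v$ stays in a fixed region determined by the energy bound and the ends $\Sigma_0,\Sigma_\infty$ (one of which is compact), and interior elliptic estimates for the perturbed Cauchy--Riemann equation, with $\lambda$ bounded and $H$ fixed and smooth, bootstrap to uniform $C^k$ bounds for every $k$. Since the equation has $s$-independent coefficients in the $(t,s)$ variables, these constants $C_\beta$ are independent of the base point $x\in S^1\times\rb$, which is the assertion.

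For $(3)$, set $a(s)=\int_{-\infty}^{s}\!\int_0^1 v^*\omega-\lambda\int_0^1 H(v(s)(t))\,dt$. Differentiating and using the pointwise identity, the two $\lambda\,\partial_s(H\circ v)$ terms cancel, so $a'(s)=\int_0^1|\partial_s v|_g^2\,dt\ge0$; thus $a$ is nondecreasing with $a(-\infty)=-\lambda h_0$ and $a(+\infty)=\langle\omega,\alpha\rangle-\lambda h_\infty$, and moreover $a(s)-a(-\infty)=\int_{-\infty}^{s}\!\int_0^1|\partial_s v|_g^2$ is exactly the energy accumulated on $(-\infty,s]$ while $a(+\infty)-a(s)$ is the energy on $[s,+\infty)$. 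It therefore suffices to produce a single $\varepsilon>0$, uniform over $\cl$, such that $v(s)(S^1)\not\subset\ul(\Sigma_0)$ forces the energy on $(-\infty,s]$ to be $\ge\varepsilon$, and symmetrically for $\Sigma_\infty$. This is a no-small-energy statement: fixing a slightly smaller neighborhood $\ul'(\Sigma_0)\Subset\ul(\Sigma_0)$, a solution whose circles emanate from $u(0)\in\Sigma_0$ and whose circle at $s$ has left $\ul(\Sigma_0)$ must sweep across the collar $\ul(\Sigma_0)\setminus\ul'(\Sigma_0)$, and a monotonicity lemma for perturbed $J$-holomorphic curves in the SGB manifold $V$ bounds the energy of such a crossing below by a constant depending only on $J$, $g$, the width of the collar and the uniform bounds from $(2)$; the same argument near the other end treats $\Sigma_\infty$. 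Taking $\varepsilon$ to be the smaller of the two constants yields both inequalities.
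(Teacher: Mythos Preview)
Your proposal is correct and follows the same route the paper itself takes: the paper's proof is a two-line sketch that first invokes Lu's a priori confinement (Theorem~2.9 in \cite{lu1998}, closely tied to the Proposition~2.5 you cite) to trap $\bigcup_{(\lambda,u)\in\cl} u(S^2)$ in a fixed compact subset of $V$, and then defers entirely to the arguments of Theorem~3.4 in \cite{hv1992}; your outline of the cylindrical Floer equation, the energy identity for (1), the bubbling analysis for (2), and the monotone action $a(s)$ for (3) is exactly the content of those references.

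One small gap in your treatment of (2) is worth flagging. The inequality $m(V,\omega,J)\ge\langle\omega,\alpha\rangle$ is not strict, so ``bubble energy $\ge\langle\omega,\alpha\rangle$ while available energy $\le\langle\omega,\alpha\rangle$'' is not by itself a contradiction. The argument is completed by observing that equality would force the bubble to absorb \emph{all} the energy, leaving a constant residual map; but a constant cannot simultaneously satisfy $u(0)\in\Sigma_0$ and $u(\infty)\in\Sigma_\infty$, since these sets are disjoint. In the Hofer--Viterbo original the normalization $\int_D u^*\omega=\tfrac12\langle\omega,\alpha\rangle$ is also used at this point to control where concentration can occur.
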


\begin{proof}[{Sketch of the proof}]
From Theorem $2.9$ in \cite{lu1998}, we obtain that $\cup_{(\lambda,u)\in\cl}u(S^2)$ is contained in a compact subset of $V$.
Following almost the same arguments of Theorem $3.4$ in \cite{hv1992}, we can see that the proposition is also true.
\end{proof}

\section{Proof of Main Theorem}

The $J$-holomorphic sphere method always requires the regular almost complex structure.
In order to get the relation between the $d$-index of $P_1\times P_2$ with the $d$-index of $P_i$, $i\in\{1,2\}$,
we need a regular almost complex structure $J=J_1\times J_2$, where $J_i\in\fl_{reg}(P_i,\omega_i)$, $i\in\{1,2\}$.
However, the product of regular almost complex structures is not regular in general.
Thus Proposition $\ref{p4dj}$ is necessary for our case. Now we can prove Theorem $\ref{p4dsgb}$.
\begin{proof}[{\bf Proof of Theorem $\ref{p4dsgb}$}]
Let $\alpha\in[S^2,P_1\times P_2]$ be of the form $[S^2\to P_1\times P_2: z\mapsto(\alpha_1(z),p_0)]$,
where $p_0\in \Sigma^2_0$ is a fixed point and $\alpha_1\in[S^2,P_1]$ is defined in the hypothesis of Theorem $\ref{p4dsgb}$.
On $P_1\times P_2$ we take the product almost
complex structure $J=J_1\times J_2$, where $J_1\in\fl_{reg}(P_1,\omega_1),J_2\in\fl_{reg}(P_2,\omega_2)$. Then
\begin{align*}
\langle\omega_1\oplus\omega_2,\alpha\rangle&=m(P_1\times P_2,\omega_1\oplus\omega_2,J)\\
                          &=m(P_1,\omega_1,J_1)=\langle\omega_1,\alpha_1\rangle\leqslant m(P_2,\omega_2).
\end{align*}
By Proposition $\ref{p4dj}$, $J$ is regular at the situation $(\alpha,\Sigma_0,\Sigma_{\infty})$.
From $d(\alpha_1,J_1,\Sigma^1_0,\Sigma^1_{\infty})\neq[\varnothing]$ and $m(P_1\times P_2,\omega_1\oplus\omega_2,J)\leqslant m(P_2,\omega_2)$,
we have $d(\alpha,J,\Sigma_0,\Sigma_{\infty})\neq[\varnothing]$.

In the following, we use the idea of \cite{hv1992} to prove Theorem $\ref{p4dsgb}$.
From Proposition $\ref{plu31}$, we can get $\cl$ is noncompact. We can assume $\{(\lambda_k,u_k)\}\subset\cl$ such that
\[
\lambda_k\to\lambda, \{(\lambda_k,u_k)\} \text{ has no convergent subsequence}.
\]
For every $(\lambda, u)\in\cl$, We define $v=u\circ\phi$, where $\phi:S^1\times\rb\to\cb,\phi(t,s)=e^{2\pi(s+it)}$.
Define $a:\rb\to\rb$ as follows, $a(s):=\int_{(-\infty,s]\times S^1}v^*\omega-\int_0^1\lambda H(v(s,t))dt$,
where $\omega=\omega_1\oplus\omega_2$.
From Proposition $\ref{pmain}$, we have
\begin{align*}
\langle\omega_1,\alpha_1\rangle-\lambda(h_\infty-h_0)&=\int_{-\infty}^\infty a'(s)ds\\
&=\int_{-\infty}^{+\infty}\int_0^1|v_s|^2dsdt\\
&\geqslant\int_{-\infty}^{s_0(v)}\int_0^1v^*\omega\geqslant\varepsilon,
\end{align*}
where $s_0(v)=\sup\{s|v((-\infty,s]\times S^1)\subset \ul(\Sigma)\}$. The last inequality is proved by the Lemma $3.1$ in \cite{hv1992},
which is also true here.

If $(\lambda,u)\in\cl$, then
\begin{align*}
0\leqslant\lambda&\leqslant(h_\infty-h_0)^{-1}(\langle\omega_1,\alpha_1\rangle-\varepsilon)\\
&=:(h_\infty-h_0)^{-1}\langle\omega_1,\alpha_1\rangle-\varepsilon'\\
&=\lambda_\infty-\varepsilon'.
\end{align*}
We define two sequences of numbers by
\begin{align*}
&s_k^0=\sup\{s|v_k((-\infty,s]\times S^1)\subset\ul(\Sigma_0)\},\\
&s_k^\infty=\inf\{s|v_k([s,+\infty)\times S^1)\subset\ul(\Sigma_\infty)\}.
\end{align*}
Note that $v_k$ denotes the map induced by $u_k$ on the cylinder. Clearly $s_k^0\leqslant s_k^\infty$.

Now we will show $s_k^\infty-s_k^0\to\infty.$ Arguing indirectly we may assume after taking a subsequence that for some constant $b>0$
\[
|s_k^\infty-s_k^0|\leqslant b~~~\text{for all}~k.
\]
Let $\hat u_k(z)=u_k(s_kz),s_k>0.$ Replacing $u_k$ by $\hat u_k,$ we may assume that for some positive constant $c>0$,
\begin{equation}\label{e1}
-c\leqslant\hat s_k^0\leqslant\hat s_k^\infty\leqslant c,
\end{equation}
where $\hat s_k^0$, $\hat s_k^\infty$ are the sequences associated to $\hat u_k.$
From (\ref{e1}) and the previous discussion, it follows immediately that $\{\hat u_k\}$ has a convergent subsequence in $H^{2,2}(S^2,P_1\times P_2)$,
say $\hat u_k\to u$, where $u$ satisfies
\begin{equation}\label{f1}
\left\{ \begin{aligned}
         \bar\partial_Ju+&\lambda h(u)=0\\
[u]&=\alpha\\
u(0)&\in\Sigma_0\\
u(\infty)&\in\Sigma_\infty.
        \end{aligned} \right.
\end{equation}
In fact, since $(\ref{e1})$ holds, the nonlinearity $u\to h(u)$ is well behaved and one can use
\emph{Bubble off analysis} to obtain the solution $u$ of $(\ref{f1})$.
\[
\frac{1}{2}\langle\omega_1\oplus\omega_2,\alpha\rangle=\frac{1}{2}\langle\omega_1,\alpha_1\rangle=\int_Du_k^*\omega=\int_{s_k^{-1}D}\hat u_k^*\omega,
\]
where $\omega=\omega_1\oplus\omega_2$. If $s_k\to0$ or $+\infty$, since $\hat u_k^*\omega\to u^*\omega$ in $H^{1,1}(S^2,\rb)$,
we have
\[
\frac{1}{2}\langle\omega_1,\alpha_1\rangle=0,~\text{or}~\frac{1}{2}\langle\omega_1,\alpha_1\rangle=\langle\omega_1,\alpha_1\rangle.
\]
This contradiction shows that $s_k\in(a,\frac{1}{a})$ for all $k$ for some suitable $a>0$ independent of $k$.
Hence, from the definition of $\hat u_k$ and the fact that $\hat u_k\to u$ it follows that $\{u_k\}$ is convergent itself.
However, this contradicts our assumption on $\{(\lambda_k,u_k)\}$. Therefore we know that
\[
s_k^\infty-s_k^0\to+\infty.
\]
We have
\[
\int_{s_k^0}^{s_k^\infty}\int_0^1|-J(v_k)\frac{\partial v_k}{\partial t}-\lambda_k\nabla H(v_k)|^2dtds
\leqslant\langle\omega_1,\alpha_1\rangle-2\varepsilon-\lambda_k(h_\infty-h_0).
\]
Hence, we can find a sequence $\{s_k\}$,
\[
s_k\in[s_k^0,s_k^\infty],
\]
such that with $x_k:=v_k(s_k,\cdot)$
\[
\parallel -J(x_k)\dot x_k-\lambda_k\nabla H(x_k)\parallel_{L^2(x_k^*T(P_1\times P_2))}\to0.
\]
Eventually taking a subsequence we may assume
\begin{equation}\label{f2}
\left\{ \begin{aligned}
         x_k&\to x ~~~~~\text{in}~H^1(S^1,P_1\times P_2)\\
-J(x)\dot x&-\lambda\nabla H(x)=0.
        \end{aligned} \right.
\end{equation}
It is obvious that $x\in C^\infty(S^1,P_1\times P_2)$. We first assume $\lambda=0.$ Let $\tilde u_k:S^2\to P_1$ be the map induced from $u_k:S^2\to P_1\times P_2$ by the projection onto the first factor. Then,
\[
\langle\omega_1,\alpha_1\rangle=\int_{S^2}u_k^*\omega_1.
\]
Now let $\tilde v_k:Z\to P_1$ be the map induced from $\tilde u_k$ in the cylinder.
Since $\nabla H$ vanishes on $\Sigma_0$ and $\Sigma_\infty$,
$\tilde u_k$ is holomorphic in the neighbourhood of all $z$ such that $\tilde u_k(z)$ is close to $\Sigma_0^1$ or $\Sigma_\infty^1$.

If $(\ref{f2})$ holds, we can use $\tilde v_k:(-\infty,s_k]\times S^1\to P_1$ and $\tilde v_k:[s_k,+\infty)\times S^1\to P_1$ to construct maps
\[
g_{\pm\infty}^k:S^2\to P_1
\]
such that
\begin{align*}
\langle\omega_1,\alpha_1\rangle&\geqslant\lim\inf[\int_{S^2}(g_{+\infty}^k)^*\omega_1]+\lim\inf[\int_{S^2}(g_{-\infty}^k)^*\omega_1]\\
&\geqslant2\langle\omega_1,\alpha_1\rangle.
\end{align*}
Since $\langle\omega_1,\alpha_1\rangle>0$, we have a contradiction. So we must have
\[
\lambda_k\to\lambda\in(0,\lambda_\infty-\varepsilon']\subset(0,\lambda_\infty)
\]
and $(\ref{f2})$ still holds.

In the following, we will show that $x$ is nonconstant. Arguing indirectly let us assume $x\equiv const\in P_1\times P_2$. Denote by $v^1_k$ the $P_1-$component of $v_k$. If
\[
\int_{-\infty}^{s_k}\int_0^1(v_k^1)^*\omega_1\to0,
\]
we have $x=m_0\in\Sigma_0$ and $v_k^1\to m_0^1\in\Sigma_0^1$ uniformly. Since $h|_{\ul(\Sigma_0)}=0$, this contradicts the definition of $s_k^0$. Similarly,
\[
\int_{s_k}^{+\infty}\int_0^1(v_k^1)^*\omega_1\to0
\]
is also impossible. Therefore, we have for some $\tau>0$
\begin{equation}\label{f3}
\begin{aligned}
\int_{s_k}^{+\infty}\int_0^1(v_k^1)^*\omega_1&\geqslant\tau,\\
\int_{-\infty}^{s_k}\int_0^1(v_k^1)^*\omega_1&\geqslant\tau,\\
\int_{-\infty}^{\infty}\int_0^1(v_k^1)^*\omega_1=&\langle\omega_1,\alpha_1\rangle.
\end{aligned}
\end{equation}
Of course, since $v_k^1(\{s_k\}\times S^1)$ converges to a constant $x^1$,
by our assumption the first two integrals in $(\ref{f3})$ must be bounded below by $<\omega_1,\alpha_1>$ contradicting the equation
\[
\int_{-\infty}^{\infty}\int_0^1(v_k^1)^*\omega_1=\langle\omega_1,\alpha_1\rangle.
\]
This shows that $x$ has to be nonconstant. Eventually we have $H(x(t))\in(h_0,h_\infty)$.
This proves the theorem.
\end{proof}

\section{Applications}

We will give some applications of Theorem $\ref{p4dsgb}$ in this section.
Note that given the standard complex structure $i$ on $\cb\pb^2$ any two different points determine up to
M\"{o}bius transformation a unique holomorphic sphere $u$.
There is an embedding $u:S^2=\cb\cup\{\infty\}\hookrightarrow\cb\pb^2$ which is holomorphic.
Let $\Sigma_0^1=\{x\}$ and $\Sigma_\infty^1=\{y\}$, where $x$, $y$ are different points in $u(S^2)$.
Then with $\alpha_1=[u]$, where $u(S^2)$ is the holomorphic curve running through $x$ and $y$ we have
\[
d(\alpha_1,i,\Sigma_0^1,\Sigma_\infty^1)=[S^1]\neq[\emptyset].
\]
We note here that $i$ is a regular complex structure.
Now let $P_1=\cb\pb^2$,
$P_2$ be a SGB symplectic manifold with $[\omega_2]|_{\pi_2(P_2)}=0$,
$\Sigma_0=\{p_0\}$, $p_0\in\{x\}\times P_2$, $\Sigma_\infty=\{y\}\times P_2$.
As an application of Theorem $\ref{p4dsgb}$, we get the following corollary:

\begin{cor}\label{cp2sgb}
Let $\Sigma_0$, $\Sigma_\infty$, $P_2$ be as above, then any stable compact smooth hypersurface $\sll$ in $\cb\pb^2\times P_2$ separating $\Sigma_0$ from $\Sigma_\infty$ possesses at least one periodic Hamiltonian trajectory.
\end{cor}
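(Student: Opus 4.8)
The plan is to deduce the statement from Theorem \ref{p4dsgb} by manufacturing, out of the stable hypersurface $\sll$, a Hamiltonian $H$ that meets the hypotheses of that theorem, and then converting the periodic Hamiltonian trajectory it produces into a closed orbit lying on $\sll$ itself. The hypotheses on the factor $P_1=\cb\pb^2$ are already supplied by the discussion preceding the corollary: with $\alpha_1=[u]$ the class of the embedded holomorphic line through $x$ and $y$, $i$ is a regular complex structure, $\alpha_1$ is $\omega_1$-minimal and represented by an embedded $i$-holomorphic sphere, $0<\langle\omega_1,\alpha_1\rangle$, and $d(\alpha_1,i,\Sigma_0^1,\Sigma_\infty^1)=[S^1]\neq[\varnothing]$. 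Since $[\omega_2]|_{\pi_2(P_2)}=0$ forces $m(P_2,\omega_2)=+\infty$, the remaining inequality $\langle\omega_1,\alpha_1\rangle\leqslant m(P_2,\omega_2)$ holds automatically. Hence everything in Theorem \ref{p4dsgb} except the Hamiltonian is in hand, and $\alpha$ is taken to be the class $z\mapsto(\alpha_1(z),p_0)$.

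First I would exploit the stability of $\sll$. A stable hypersurface admits a two-sided collar $\Phi:\sll\times(-1,1)\hookrightarrow\cb\pb^2\times P_2$ with $\Phi(\cdot,0)=\sll$ along which the characteristic line bundles of the slices $\sll_s:=\Phi(\sll\times\{s\})$ are all carried onto one another by the projection $\sll\times(-1,1)\to\sll$; in particular the closed characteristics of any two slices are in canonical correspondence. Because $\sll$ separates $\Sigma_0$ from $\Sigma_\infty$, it cuts $\cb\pb^2\times P_2$ into a component $\Omega_0\ni\Sigma_0$ and a component $\Omega_\infty\supset\Sigma_\infty$. As $\sll$ is compact while $\Sigma_\infty=\{y\}\times P_2$ meets every end of the noncompact factor, $\Omega_\infty$ is the unbounded piece and $\Omega_0$ is relatively compact.

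Next I would build $H:\cb\pb^2\times P_2\to\rb$ that depends only on the collar coordinate $s$ inside $\Phi(\sll\times(-1,1))$, equals a constant $h_0$ on a neighborhood $\ul(\Sigma_0)$ of $\Sigma_0$ in $\Omega_0$, equals a constant $h_\infty>h_0$ on a neighborhood $\ul(\Sigma_\infty)$ of $\Sigma_\infty$ exhausting the unbounded part of $\Omega_\infty$, with $h_0\leqslant H\leqslant h_\infty$ throughout and $H=f(s)$ strictly increasing across the collar. By construction $X_H$ vanishes wherever $H$ is locally constant and, on each slice $\sll_s$, is $f'(s)$ times the characteristic vector field, so the nonconstant periodic orbits of $X_H$ are exactly closed characteristics sitting on a single slice. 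Theorem \ref{p4dsgb} then delivers a nonconstant $T$-periodic solution $x(t)$ with $h_0\leqslant H(x(t))\leqslant h_\infty$ and $0<T(h_\infty-h_0)<\langle\omega_1,\alpha_1\rangle$; being nonconstant, $x$ lies on one slice $\sll_{s_0}$. Finally, the stability correspondence carries this closed characteristic on $\sll_{s_0}$ to a closed characteristic, hence a periodic Hamiltonian trajectory, on $\sll=\sll_0$, which is the assertion.

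I expect the one genuinely delicate point to be the construction of $H$ satisfying the compactness requirement (\ref{eqkcpt}) together with $h_0\le H\le h_\infty$: because $P_2$ is noncompact, the neighborhood $\ul(\Sigma_\infty)$ must be enlarged so as to absorb the entire end of $\cb\pb^2\times P_2$ while staying disjoint from $\ul(\Sigma_0)$ and leaving $\sll$ as a regular level. This is precisely where the compactness of $\sll$ (confining it to $\cb\pb^2\times B$ for some compact $B\subset P_2$) and the SGB geometry are used, and it is carried out exactly as in \cite{hv1992} and \cite{lu1998}; once $H$ is available the remaining steps are formal.
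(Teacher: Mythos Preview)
Your proposal is correct and matches the paper's approach: the paper gives no explicit proof of this corollary, simply recording that the hypotheses of Theorem~\ref{p4dsgb} are met (the discussion preceding the statement supplies $\alpha_1$, the regularity of $i$, and $d(\alpha_1,i,\Sigma_0^1,\Sigma_\infty^1)=[S^1]\neq[\varnothing]$, while $[\omega_2]|_{\pi_2(P_2)}=0$ forces $m(P_2,\omega_2)=+\infty$) and then invoking the general corollary after Theorem~\ref{p4dsgb}, whose deduction is deferred to \cite{hv1992} and \cite{lu1998}. Your sketch of that deduction---building a Hamiltonian constant outside a collar of the stable hypersurface, applying Theorem~\ref{p4dsgb} to obtain a nonconstant periodic orbit on some slice $\sll_{s_0}$, and transporting it to $\sll$ via the stability correspondence---is exactly the standard argument those references contain, and your identification of condition~(\ref{eqkcpt}) as the only point requiring care in the noncompact setting is accurate.
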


It is well known that the standard cotangent bundles $(T^*N,\omega)$ over closed manifolds $N$ is SGB with $[\omega]|_{\pi_2(T^*N)}=0$
(\text{cf.} \cite{cgk2004}, \cite{lu1998}).

Liouville manifold $(\hat M, \hat\lambda)$ is a SGB symplectic manifold with $[d\hat\lambda]|_{\pi_2(\hat M)}=0$. Let us recall the definition of Liouville manifold now.
A $1$-form $\alpha$ on a manifold $\Sigma$ is called a contact form for $\xi:=\text{ker}\alpha$, if $d\alpha$ is nondegenerate on $\xi$.
In this case $\xi$ is called a contact structure.
A compact exact symplectic manifold with boundary $(M,\lambda)$ is called a {\bf Liouville domain},
if $(\Sigma:=\partial M, \alpha:=\lambda_{|\partial M})$ is a contact submanifold.
We know every Liouville domain carries a Liouville vector field $X$ defined by $\iota_X\omega=\lambda$,
and the contact condition implies that $X$ points outward at the boundary.
We can paste the positive end of a symplectization $(\Sigma\times[0,\infty),d(e^t\alpha))$ along the boundary $\Sigma$.
Then we obtain a complete {\bf Liouville manifold}, which is denoted by $(\hat M, \hat\lambda)$.

As in\cite{ag1990},\cite{wang2004}, we introduce the following notation.
\begin{defi}\label{defftt}
A noncompact manifold $M$ is said to be of finite topological type,
if there is a compact domain $\Omega\subset M$ such that $M\setminus\mathring{\Omega}$
is diffeomorphic to $\partial\Omega\times[1,\infty)$.
\end{defi}

Actually, if $M$ is a subset of a closed manifold or if $M$ is of finite topological type
the cotangent bundles $(T^*M,\omega)$ with standard symplectic structure are geometrically bounded.
This is first pointed out by Audin, Lalonde and Polterovich \cite{alp1994} P.$286$.
Lu \cite{lu1998} also claimed the cotangent bundle of a finite topological type manifold with twisted symplectic structure is SGB and omit the proof.
In the following, we will give a proof of this for the completeness of our results.
Our proof uses the idea of Proposition $2.2$ in \cite{cgk2004}.

\begin{prop}\label{psgbcotangent}
Let $M$ be a manifold of finite topological type,
then the cotangent bundle $(T^*M,\omega)$ with standard symplectic structure is SGB.
\end{prop}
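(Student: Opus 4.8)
The plan is to verify the two clauses of Definition \ref{dgb} while producing simultaneously a compatible almost complex structure $J\in\fl(T^*M,\omega)$ and a complete metric $g$ on $T^*M$ whose sectional curvature is bounded above and whose injectivity radius is positive, such that $J$ is uniformly tamed by $\omega$ with respect to $g$. The entire difficulty is concentrated at infinity, and the finite topological type hypothesis is used precisely to control the geometry there.

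First I would endow the base $M$ with a complete Riemannian metric $g_M$ of bounded geometry. By Definition \ref{defftt} there is a compact domain $\Omega$ with $M\setminus\mathring\Omega\cong\partial\Omega\times[1,\infty)$; I choose $g_M$ so that on this end it is the product metric $g_{\partial\Omega}\oplus dr^2$, where $g_{\partial\Omega}$ is any fixed metric on the closed manifold $\partial\Omega$. Then, by compactness of $\Omega$ and $\partial\Omega$ together with translation invariance along the $[1,\infty)$-factor, $(M,g_M)$ has sectional curvature bounded in absolute value and injectivity radius bounded below by a positive constant. This is the only step where finite topological type enters, and it is exactly the point emphasized in \cite{alp1994}.

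Next I would exhibit the structure of $T^*M$ at infinity as a symplectization, following Proposition $2.2$ in \cite{cgk2004}. Away from the zero section, the radial coordinate $r=|p|_{g_M}$ together with the projection to the unit cotangent bundle identifies $T^*M\setminus 0_M\cong S^*M\times(0,\infty)$, and the tautological one-form becomes $\lambda_{can}=r\,\lambda$, where $\lambda$ is the induced contact form on $S^*M$; hence $\omega=d\lambda_{can}=d(r\lambda)$ is the symplectization form. Because $S^*M$ is a sphere bundle over $(M,g_M)$ which is a metric product at infinity, $(S^*M,\lambda)$ is a contact manifold of bounded geometry. I would then split $T^*M=D^*M\cup\bigl(S^*M\times[1,\infty)\bigr)$, use $g_M$ and the Levi--Civita splitting $T(T^*M)=H\oplus V$ to define the compatible $J$ (rotating $H$ into $V$), take on the disk bundle $D^*M$ the metric $\omega(\cdot,J\cdot)$, which has bounded geometry there because the fibre coordinate is bounded and the base has bounded geometry, and on the end use the symplectization metric adapted to $\lambda$, whose sectional curvature is bounded above (it decays in the cone model as $r\to\infty$) and whose injectivity radius stays positive since $S^*M$ has bounded geometry; gluing the two across $\{r\approx1\}$ gives the desired complete $g$. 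Uniform tameness of $J$ by $\omega$ then follows from the uniform comparison between $g$ and $\omega(\cdot,J\cdot)$ built into this construction, yielding constants $\alpha,\beta$ as in Definition \ref{dgb}.

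The main obstacle is the behaviour of the curvature in the fibre direction: the metric naturally induced by a compatible $J$ (a Sasaki-type metric) acquires a term from $\omega_{can}$ proportional to $|p|$ times the curvature of $g_M$, so its sectional curvature does not stay bounded above as $|p|\to\infty$. The resolution, which is the content of the construction borrowed from \cite{cgk2004}, is to organize the end as a symplectization and pass to the cone/cylindrical model in which the curvature is controlled, while keeping $J$ merely uniformly tamed by $\omega$ with respect to $g$ rather than requiring $g=\omega(\cdot,J\cdot)$ globally. Making all of these estimates uniform over the \emph{noncompact} base is exactly what the finite topological type hypothesis secures, through the bounded geometry of $g_M$ and hence of the contact manifold $S^*M$. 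Once these uniform bounds are in place, both clauses of Definition \ref{dgb} hold with $J\in\fl(T^*M,\omega)$, so $(T^*M,\omega)$ is SGB.
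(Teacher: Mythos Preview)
Your outline is sound and rests on the same two pillars as the paper---the symplectization/cone structure in the fibre direction and the translation-invariant product structure on the base end---but you organise them differently. You absorb the base noncompactness into the single assertion ``$S^*M$ has bounded geometry because $(M,g_M)$ does'' and then invoke a \emph{noncompact} version of Proposition~2.2 in \cite{cgk2004}. The paper instead keeps its contact-type hypersurface $\Sigma$ \emph{compact} (it sits inside $T^*M|_\Omega$), applies the cone argument of \cite{cgk2004} there verbatim, and then extends all structures over the cylindrical end $\Lambda\cong\partial\Omega\times[1,\infty)$ by the explicit lifted translations $\psi_{s\sharp}$, which are isometries for the constructed metric and commute with the fibre dilations $\varphi_t$. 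This factorisation buys two things: nothing beyond the closed-contact-manifold case of \cite{cgk2004} is needed, and the injectivity-radius bound is reduced, via an explicit Christoffel-symbol computation (Lemma~\ref{lemgeodesics}) showing that geodesics are preserved by $\varphi_t$ and $\psi_{s\sharp}$, to a compact region $W$. Your route is conceptually cleaner but leaves the step ``symplectization of a noncompact contact manifold of bounded geometry is SGB'' unproved; it is true, but justifying it amounts to redoing precisely what the paper does with $\psi_{s\sharp}$.

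One point deserves correction. You write that the metric $\omega(\cdot,J\cdot)$ induced by a compatible $J$ has sectional curvature that ``does not stay bounded above as $|p|\to\infty$'' and therefore decouple $g$ from $\omega(\cdot,J\cdot)$. That is true for the na\"ive Sasaki $J$, but the paper chooses $J$ to commute with the fibre dilations, $J\circ(\varphi_t)_*=(\varphi_t)_*\circ J$, so that $g=\omega(\cdot,J\cdot)$ itself satisfies $\varphi_t^*g=e^tg$; for such a conical metric the sectional curvature tends to zero at fibre infinity by \cite{g1978}. Hence no decoupling is necessary: the paper takes $g=\omega(\cdot,J\cdot)$ throughout, which also makes the uniform-tameness constants trivially $\alpha=\beta=1$.
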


\begin{proof}
Since $M$ is of finite topological type,
we may assume there is a compact domain $\Omega\subset M$ such that $M\setminus\mathring{\Omega}$
is diffeomorphic to $\partial\Omega\times[1,\infty)$. Assume the diffeomorphism is $h:\partial\Omega\times[1,\infty)\to M\setminus\mathring{\Omega}$.
Denote $\Lambda=M\setminus\mathring{\Omega}$, $\Lambda_s=h(\partial\Omega\times[s,\infty))$, $s\geqslant1$,
$\partial\Lambda_s=h(\partial\Omega\times\{s\})$.

First we will define the Riemannian metric on $T^*M$.
Let $\varphi_t$ be the flow on $T^*M$ formed by fiberwise dilations by the factor $e^t$.
Choose a fiberwise convex hypersurface $\Sigma\subset T^*M|_{\Omega}$, enclosing the compact domain $\Omega$.
Note that $\Sigma$ has contact type for $\omega$.
Let $U$ be the closure of the unbounded part of the complement to $\Sigma$ in $T^*M|_{\Omega}$.
Then $U=\cup_{t\geqslant0}\varphi_t(\Sigma).$ On the closure of the bounded part of the complement to $\Sigma$ in $T^*M|_{\Omega}$,
we can choose a compatible almost complex structure $J$.
Let $g$ be the Riemannian metric determined by $\omega$ and $J$, i.e. $g(\cdot, \cdot)=\omega( \cdot ,J\cdot)$.
(We also require that the radical vector is $g$-orthogonal to $\Sigma$.)
Now we can extend these structures to $U$ so that
\begin{equation}\label{eqgfiber}
\varphi_t^*g=e^tg~~\text{for}~t\geqslant0,
\end{equation}
i.e. g, just as $\omega$, is homogeneous of degree one with respect to the dilations, and
\[
J\circ(\varphi_t)_*=(\varphi_t)_*\circ J.
\]
Then the metric $g$, the almost complex structure $J$ and the standard symplecture $\omega$ are compatible on $U$.
Hence are compatible on $T^*M|_{\Omega}$. To define the Riemannian metric on $T^*M|_\Lambda$,
let $\bar\psi_s$ be the flow of the vector field $\partial_s$ on $\partial\Omega\times[1,\infty)$, i.e.
\begin{align*}
\bar\psi_s:~\partial\Omega\times[1,\infty)&\to\partial\Omega\times[1+s,\infty)\\
(x,s_0)&\mapsto(x,s_0+s).
\end{align*}
Let $\psi_s=h\circ\bar\psi_s\circ h^{-1}:\Lambda\to\Lambda_{1+s}$.
Then there is a natural symplectomorphism  which lifts $\psi_s$ (see \cite{s2008} Chapter 2)
\begin{align*}
\psi_{s\sharp}:T^*M|_\Lambda&\to T^*M|_{\Lambda_{1+s}}\\
(x,\xi)\mapsto&(\psi_s(x),(\psi_s^{-1})^*\xi).
\end{align*}
It is easy to see
\begin{equation}\label{eqcomm}
\psi_{s\sharp}\circ\varphi_t=\varphi_t\circ\psi_{s\sharp}.
\end{equation}
Now extend those structures to $T^*M|_\Lambda$ so that
\begin{equation}\label{eqgbase}
(\psi_{s\sharp})^*g=g~s\geqslant 1
\end{equation}
and
\[
J\circ(\psi_{s\sharp})_*=(\psi_{s\sharp})_*\circ J.
\]
We know the standard symplectic structure $\omega$ also satisfies $(\psi_{s\sharp})^*\omega=\omega$.
Then $\omega$,$J$, and $g$ are compatible on $T^*M|_\Lambda$.
Thus we get a compatible triple $(\omega,J,g)$ on $T^*M$.

The metric g is obviously complete.
Indeed, define
\begin{align*}
\Sigma_e&:=\Sigma\cup(\cup_{s\geqslant1}\psi_{s\sharp}(\Sigma\cap{T^*M|_{\partial\Omega}}))\\
\Sigma_t&:=\varphi_t(\Sigma_e).
\end{align*}
Identifying $\cup_{t\geqslant0}\Sigma_t$ with $\Sigma_e\times[0,\infty)$, the metric $g$ has the form
\begin{equation}\label{gformula}
g(\cdot,\cdot)=e^{t}(g(\partial_t,\partial_t)dt^2+g|_{\Sigma_e})((\varphi_{t}^{-1})_*\cdot,(\varphi_{t}^{-1})_*\cdot)\circ\varphi_{t}^{-1}.
\end{equation}
It is clear the integral curves $\varphi_t(x)$, for $t>0$ and $x\in\Sigma_e$, are minimizing geodesics of g.
The distance from $x$ to $\varphi_t(x)$, $L_x(t)=\int_0^t(e^tg(\partial_t,\partial_t))^{\frac{1}{2}}dt$, goes to $\infty$ as $t\to\infty$.
Let $|s_1-s_2|$ be positive and small. Assume $x\in T^*M|_{\partial\Lambda_{s_1}}\cap\Sigma_{t_1}$ and $y\in T^*M|_{\partial\Lambda_{s_2}}\cap\Sigma_{t_2}$.
If $|t_2-t_1|\to\infty$, dist$(x,y)\to\infty$. Let $\gamma(s)$ be a curve with $\gamma(0)=x$, $\gamma(1)=y$.
From $(\ref{gformula})$, we have the length $L(\gamma(s))$ of $\gamma(s)$ equals $e^{\frac{t_0}{2}}L(\varphi_{t_0}^{-1}(\gamma(s)))$,
where $L(\varphi_{t_0}^{-1}(\gamma(s)))$ is the length of $\varphi_{t_0}^{-1}(\gamma(s))$.
Thus we can get $\text{dist}(T^*M|_{\partial\Lambda_{s_1}},T^*M|_{\partial\Lambda_{s_2}})$
is determined by the compact parts of $T^*M|_{\partial\Lambda_{s_1}}$ and $T^*M|_{\partial\Lambda_{s_2}}$.
Thus 
$$
\text{dist}(T^*M|_{\partial\Lambda_{s_1}},T^*M|_{\partial\Lambda_{s_2}})>0.
$$
From equation $(\ref{eqgbase})$, we know a curve $\gamma(t)$ from $T^*M|_{\partial\Lambda_{s_1}}$ to $T^*M|_{\partial\Lambda_{s_2}}$
has the same length with the curve $\psi_{s\sharp}(\gamma(t))$ from
$T^*M|_{\partial\Lambda_{s+s_1}}$ to $T^*M|_{\partial\Lambda_{s+s_2}}$. $\psi_{s\sharp}$ is a symplectomorphism.
Thus we have
\[
\text{dist}(T^*M|_{\partial\Lambda_{s+s_1}},T^*M|_{\partial\Lambda_{s+s_2}})=
\text{dist}(T^*M|_{\partial\Lambda_{s_1}},T^*M|_{\partial\Lambda_{s_2}}).
\]
Therefore, every bounded subset of $T^*M$ is contained in a compact subset and is relatively compact.
By Hopf-Rinow Theorem, this is equivalent to completeness.

From the Lemma $1$ in \cite{g1978} and the definition of the metric $(\ref{eqgfiber})$,
it follows that the sectional curvature of $g$ goes to zero as $x\to\infty$ in $U$.
Thus the sectional curvature of $g$ is bounded from above on $T^*M|_{\Omega}$.
From $(\ref{eqgbase})$ we know the sectional curvature of $g$ on $T^*M|_\Lambda$ is determined by the sectional curvature of $g$ on $T^*M|_\Omega$
which is bounded from above.
We get that the sectional curvature of $g$ is bounded from above on $T^*M$.

Define $\Sigma':=\varphi_1(\Sigma\cup(\cup_{s\leqslant2}\psi_{s\sharp}(\Sigma\cap{T^*M|_{\partial\Omega}})))$.
Let $W,U'$ denote the closure of the bounded and unbounded part of the complement to
$\Sigma'$ in $T^*M|_{\Omega\cup h(\partial\Omega\times[1,2])}$ respectively.
Since $W$ is compact, we know the injectivity radius of it is bounded away from zero.
By Lemma $\ref{lemgeodesics}$, we know a curve $\gamma_s(x)$ through $x\in U_{\frac{1}{2}}$,$U_\frac{1}{2}=\cup_{t\geqslant\frac{1}{2}}\Sigma_t$, is a geodesic
if and only if $\varphi_{t}^{-1}(\gamma_s(x))$ is a geodesic through $\varphi_{t}^{-1}(x)$, for any $0<t\leqslant t_0(x)$,
where $t_0(x)$ is the real number such that $\varphi_{t_0}^{-1}(x)\in\Sigma_{\frac{1}{2}}$.
A curve $\gamma_t(x)$ is a geodesic through $x\in T^*M|_{\Lambda_2}$
if and only if $\psi_{s\sharp}^{-1}(\gamma_t(x))$ is a geodesic through $\psi_{s\sharp}^{-1}(x)$, for any $0<s\leqslant s_0(x)$,
$s_0(x)$ is the real number such that $\psi_{s_0\sharp}^{-1}(x)\in T^*M|_{\partial\Lambda_{\frac{3}{2}}}$.
Thus if $\gamma(x)$ is a geodesic loop with small length $L(\gamma)$ through $x\in T^*M\setminus W$,
there is a geodesic loop $\gamma'$ in $W$ with length $L(\gamma')\leqslant L(\gamma)$.
Combined with the completeness and the upper bound of sectional curvature,
we know if the injectivity radius of metric $g$ on $T^*M$ is not positive there is a geodesic loop $\gamma'$ in $W$
with length as small as we want (Lemma $16$ in \cite{p2006} P.$142$). This contradicts to the fact that the injectivity radius on $W$ is positive.
Thus we have the injectivity radius of $g$ is bounded away from zero on $T^*M$.
\end{proof}

The calculus of the geodesics is given in the following lemma.
\begin{lem}\label{lemgeodesics}
With the metric $g$ and notations defined in Proposition $\ref{psgbcotangent}$,
a curve $\gamma_s(x)$ through $x\in U_\frac{1}{2}$,$U_\frac{1}{2}=\cup_{t\geqslant\frac{1}{2}}\Sigma_t$, is a geodesic
if and only if $\varphi_{t}^{-1}(\gamma_s(x))$ is a geodesic through $\varphi_{t}^{-1}(x)$, for any $0<t\leqslant t_0(x)$,
where $t_0(x)$ is the real number such that $\varphi_{t_0}^{-1}(x)\in\Sigma_\frac{1}{2}$.
A curve $\gamma_t(x)$ is a geodesic through $x\in T^*M|_{\Lambda_\frac{3}{2}}$
if and only if $\psi_{s\sharp}^{-1}(\gamma_t(x))$ is a geodesic through $\psi_{s\sharp}^{-1}(x)$, for any $0<s\leqslant s_0(x)$,
where $s_0(x)$ is the real number such that $\psi_{s_0\sharp}^{-1}(x)\in T^*M|_{\partial\Lambda_{\frac{3}{2}}}$.
\end{lem}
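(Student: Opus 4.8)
The plan is to reduce both assertions to the elementary fact that homotheties and isometries carry geodesics to geodesics, and then to read off from the scaling relations \eqref{eqgfiber} and \eqref{eqgbase} that $\varphi_t^{-1}$ and $\psi_{s\sharp}^{-1}$ are, respectively, a homothety and an isometry. First I would record the precise principle: if $f$ is a diffeomorphism satisfying $f^*g = c\,g$ for a \emph{constant} $c>0$, then $f$ maps affinely parametrized $g$-geodesics to affinely parametrized $g$-geodesics. Indeed, the Levi-Civita connection of $c\,g$ coincides with that of $g$, since the Christoffel symbols depend only on $g^{-1}\,dg$ and the constant factor $c$ cancels; hence $c\,g$ and $g$ have exactly the same geodesics. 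On the other hand $f^*g=c\,g$ says precisely that $f\colon (T^*M,\,c\,g)\to(T^*M,\,g)$ is an isometry, so it sends $c\,g$-geodesics to $g$-geodesics, i.e. $g$-geodesics to $g$-geodesics; applying the same to $f^{-1}$ gives the converse, so the geodesic property is equivalent on the two sides. The case $c=1$ is just the statement that an isometry preserves geodesics.

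For the first assertion I would apply this to the dilation flow. Fix $t$ with $0<t\le t_0(x)$. By \eqref{eqgfiber} we have $\varphi_t^*g = e^{t}g$, and applying $(\varphi_t^{-1})^*$ to both sides yields $(\varphi_t^{-1})^*g = e^{-t}g$; thus $\varphi_t^{-1}=\varphi_{-t}$ is a homothety with the constant factor $e^{-t}$. The choice of $t_0(x)$, namely $\varphi_{t_0}^{-1}(x)\in\Sigma_{1/2}$, guarantees that for $0<t\le t_0(x)$ the image $\varphi_t^{-1}$ of the whole curve stays inside $U_{1/2}$, where \eqref{eqgfiber} is valid, so the homothety identity may be invoked along the entire curve rather than merely pointwise. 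By the principle above, $\gamma_s(x)$ is a geodesic if and only if $\varphi_t^{-1}(\gamma_s(x))$ is a geodesic through $\varphi_t^{-1}(x)$, which is the claim.

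For the second assertion I would argue identically with $\psi_{s\sharp}$ in place of $\varphi_t$, now using \eqref{eqgbase}. Since $(\psi_{s\sharp})^*g=g$ for $s\ge 1$, each $\psi_{s\sharp}$ is an isometry, hence so is its inverse, and the $c=1$ case of the principle shows that $\gamma_t(x)$ is a geodesic exactly when $\psi_{s\sharp}^{-1}(\gamma_t(x))$ is a geodesic through $\psi_{s\sharp}^{-1}(x)$. The condition $\psi_{s_0\sharp}^{-1}(x)\in T^*M|_{\partial\Lambda_{3/2}}$ for $0<s\le s_0(x)$ again merely keeps the relevant points within the domain on which \eqref{eqgbase} was established.

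The argument itself is routine once the homothety/isometry principle is in place; I expect the only genuine care to be the bookkeeping of domains, that is, verifying that for $0<t\le t_0(x)$ (respectively $0<s\le s_0(x)$) the image of the full curve under $\varphi_t^{-1}$ (respectively $\psi_{s\sharp}^{-1}$) remains inside the region $U_{1/2}$ (respectively $T^*M|_\Lambda$, reaching $\partial\Lambda_{3/2}$ at the extremal value) on which the scaling relations \eqref{eqgfiber} and \eqref{eqgbase} hold, so that the conformal factor really is constant along the curve and the equivalence is legitimate end to end.
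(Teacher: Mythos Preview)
Your argument is correct and rests on a cleaner principle than the paper's: since a global homothety $f^*g=c\,g$ leaves the Levi--Civita connection unchanged (the Christoffel symbols of $c\,g$ and $g$ agree), $f$ and $f^{-1}$ exchange affinely parametrized $g$-geodesics; specializing to $c=e^{-t}$ and $c=1$ gives both assertions at once. The paper, by contrast, carries out the same reduction by hand: it chooses adapted local coordinates $(x^i,y^{\bar i})$ on $T^*M$, computes how each block $g_{ij},g_{i\bar j},g_{\bar i\bar j}$ scales under $\varphi_t^{-1}$, derives the resulting scaling of every Christoffel symbol $\Gamma^{k}_{ij},\Gamma^{\bar k}_{ij},\ldots$, and then checks termwise that the geodesic equation transforms to itself. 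Your route is shorter and coordinate-free; the paper's explicit bookkeeping buys nothing extra here, though its intermediate formulas for the Christoffel symbols could be reused if one needed quantitative curvature bounds.

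One small point you should make explicit: the relation \eqref{eqgfiber} was imposed only on $U\subset T^*M|_{\Omega}$, whereas $U_{1/2}$ contains points lying over $\Lambda$, where the metric was instead defined by \eqref{eqgbase}. You still need $\varphi_t^*g=e^t g$ there to run your homothety argument. The paper closes this gap at the end of its proof by combining \eqref{eqcomm} and \eqref{eqgbase}:
\[
(\psi_{s\sharp})^*\varphi_t^*g=\varphi_t^*(\psi_{s\sharp})^*g=\varphi_t^*g,\qquad
(\psi_{s\sharp})^*\varphi_t^*g=e^t(\psi_{s\sharp})^*g=e^t g,
\]
so $\varphi_t^*g=e^t g$ indeed extends over $\Lambda$. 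With that one line added, your proof is complete.
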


\begin{proof}
We only give the proof of the first assertion here, since the second can be proved similarly.
Let $\pi:T^*M\to M$ be the projection of the cotangent bundle.
Assume $x\in U_\frac{1}{2}$ such that $\pi(x)\in\Omega$. The metric is defined by $(\varphi_t)^*g=e^tg$. Thus we have
\begin{equation*}
\begin{aligned}
g&=(\varphi_{t}^{-1})^*(e^tg)\\
g(\cdot,\cdot)&=e^tg((\varphi_{t}^{-1})_*\cdot,(\varphi_{t}^{-1})_*\cdot)\circ\varphi_{t}^{-1}.%\\
%g^{-1}(\cdot,\cdot&)=e^{-t}g^{-1}\circ\varphi_{t}^{-1}((\varphi_{t}^{-1})_*\cdot,(\varphi_{t}^{-1})_*\cdot).
\end{aligned}
\end{equation*}
Choose a local coordinate chart $(V,\varphi)$ of $M$ such that $\pi(x)\in V$ and $(\pi^{-1}(V),h')$ is a local trivialization of $T^*M$,
i.e.
\[
\pi^{-1}(V)\to V\times\rb^n\to\varphi(V)\times\rb^n
\]
is a local coordinate chart of $x$ in $T^*M$. Let $(x^1,...,x^n,y^1,...y^n)$ be the local coordinates
and denote
\begin{equation*}
\begin{aligned}
\partial_i=\frac{\partial}{\partial x^i}~i=1,2,...,n,~
&\partial_{\bar i}=\frac{\partial}{\partial y^i}~i=1,2,...,n.
\end{aligned}
\end{equation*}
%Thus on $\pi^{-1}(V)\cap(\cup_{t\geqslant0}\Sigma_t)$
%\begin{equation}\label{gformula}
%g(X,Y)=\left\{ \begin{aligned}
%         e^{-t}(g(\partial_t,\partial_t)dt^2+g|_{\Sigma_e})(X,Y),\text{if }X,Y\in \text{span}\{\partial_{\bar i},i=1,...,n\},\\
%(g(\partial_t,\partial_t)dt^2+g|_{\Sigma_e})(X,Y),\text{if one of }X,Y\in\text{span}\{\partial_{i},i=1,...,n\},\\
%\text{the other one of }X,Y\in\text{span}\{\partial_{\bar i},i=1,...,n\},\\
%e^{t}(g(\partial_t,\partial_t)dt^2+g|_{\Sigma_e})(X,Y),\text{if }X,Y\in\text{span}\{\partial_{i},i=1,...,n\}.
%        \end{aligned} \right.
%\end{equation}
Let
\begin{align*}
g_{ij}=g(\partial_i,\partial_j),~ &g_{\bar i,\bar j}=g(\partial_{\bar i},\partial_{\bar j}),\\
g_{i,\bar j}=g(\partial_i,\partial_{\bar j}),~&g_{\bar i,j}=g(\partial_{\bar i},\partial_j).
\end{align*}
Then
\begin{align*}
\begin{pmatrix}
(g^{ij})~&(g^{i\bar j})\\
(g^{\bar ij})~&(g^{\bar i\bar j})
\end{pmatrix}
=&
\begin{pmatrix}
(g_{ij})~&(g_{i\bar j})\\
(g_{\bar ij})~&(g_{\bar i\bar j})
\end{pmatrix}
^{-1}.
\end{align*}
The Christoffel symbols corresponding to the Riemannian metric $g$ is given by
\[
\Gamma_{ij}^k=\frac{1}{2}g^{k\xi}(\partial_j g_{i\xi}+\partial_i g_{j\xi}-\partial_\xi g_{ij})
+\frac{1}{2}g^{k\bar\xi}(\partial_j g_{i\bar\xi}+\partial_i g_{j\bar\xi}-\partial_{\bar\xi} g_{ij}).
\]
The push forward of the vector fields can be given by
\[
(\varphi_{t}^{-1})_*
\begin{pmatrix}
\partial_1 \\
\vdots\\
\partial_n\\
\partial_{\bar1}\\
\vdots\\
\partial_{\bar n}
\end{pmatrix}
\Bigg|_{x}
=
\begin{pmatrix}
Id~&0\\
0~&e^{-t}Id
\end{pmatrix}
\begin{pmatrix}
\partial_1 \\
\vdots\\
\partial_n\\
\partial_{\bar1}\\
\vdots\\
\partial_{\bar n}
\end{pmatrix}
\Bigg|_{{\varphi_{t}^{-1}(x)}}.
\]
Then we have
\[
\begin{pmatrix}
(g_{ij})~&(g_{i\bar j})\\
(g_{\bar ij})~&(g_{\bar i\bar j})
\end{pmatrix}
=
\begin{pmatrix}
(e^tg_{ij})~&(g_{i\bar j})\\
(g_{\bar ij})~&(e^{-t}g_{\bar i\bar j})
\end{pmatrix}
\circ\varphi_{t}^{-1}.
\]
Thus
\[
\begin{pmatrix}
(g^{ij})~&(g^{i\bar j})\\
(g^{\bar ij})~&(g^{\bar i\bar j})
\end{pmatrix}
=
\begin{pmatrix}
(e^{-t}g^{ij})~&(g^{i\bar j})\\
(g^{\bar ij})~&(e^tg^{\bar i\bar j})
\end{pmatrix}
\circ\varphi_{t}^{-1}.
\]
To get the relation of $\Gamma_{ij}^k$ with $\Gamma_{ij}^k\circ\varphi_{t}^{-1}$, we need the following relation
\begin{equation*}
\begin{aligned}
\partial_ig_{jk}&=\partial_ig(\partial_j,\partial_k)\\
&=\partial_i[e^tg((\varphi_{t}^{-1})_*\partial_j,(\varphi_{t}^{-1})_*\partial_k)\circ\varphi_{t}^{-1}]\\
&=e^t[(\partial_lg_{jk})\circ\varphi_{t}^{-1}\partial_i(\varphi_{t}^{-1})^l+
(\partial_{\bar l}g_{jk})\circ\varphi_{t}^{-1}\partial_i(\varphi_{t}^{-1})^{\bar l}]\\
&=e^t(\partial_ig_{jk})\circ\varphi_{t}^{-1}.
\end{aligned}
\end{equation*}
Similarly, we have
\begin{equation*}
\begin{aligned}
&\partial_ig_{\bar jk}=(\partial_ig_{\bar jk})\circ\varphi_{t}^{-1},~
\partial_ig_{j\bar k}=(\partial_ig_{j\bar k})\circ\varphi_{t}^{-1},\\
&\partial_{\bar i}g_{jk}=(\partial_{\bar i}g_{jk})\circ\varphi_{t}^{-1},~
\partial_ig_{\bar j\bar k}=e^{-t}(\partial_ig_{\bar j\bar k})\circ\varphi_{t}^{-1},\\
&\partial_{\bar i}g_{\bar jk}=e^{-t}(\partial_{\bar i}g_{\bar jk})\circ\varphi_{t}^{-1},~
\partial_{\bar i}g_{j\bar k}=e^{-t}(\partial_{\bar i}g_{j\bar k})\circ\varphi_{t}^{-1},\\
&\partial_{\bar i}g_{\bar j\bar k}=e^{-2t}(\partial_{\bar i}g_{\bar j\bar k})\circ\varphi_{t}^{-1}.
\end{aligned}
\end{equation*}
The Christoffel symbols $\Gamma_{ij}^k$ and $\Gamma_{ij}^k\circ\varphi_{t}^{-1}$ have the following relation
\begin{equation*}
\begin{aligned}
\Gamma_{ij}^k=&\frac{1}{2}g^{k\xi}(\partial_j g_{i\xi}+\partial_i g_{j\xi}-\partial_\xi g_{ij})
+\frac{1}{2}g^{k\bar\xi}(\partial_j g_{i\bar\xi}+\partial_i g_{j\bar\xi}-\partial_{\bar\xi} g_{ij})\\
=&\frac{1}{2}e^{-t}g^{k\xi}\circ\varphi_{t}^{-1}
(e^t(\partial_jg_{i\xi})\circ\varphi_{t}^{-1}+e^t(\partial_ig_{j\xi})\circ\varphi_{t}^{-1}-e^t(\partial_\xi g_{ij})\circ\varphi_{t}^{-1})\\
&+\frac{1}{2}g^{k,\bar\xi}\circ\varphi_{t}^{-1}
((\partial_jg_{i\bar\xi})\circ\varphi_t^{-1}+(\partial_ig_{j\bar\xi})\circ\varphi_t^{-1}-(\partial_{\bar\xi} g_{ij})\circ\varphi_t^{-1})\\
=&\frac{1}{2}g^{k\xi}\circ\varphi_t^{-1}
((\partial_jg_{i\xi})\circ\varphi_t^{-1}+(\partial_ig_{j\xi})\circ\varphi_t^{-1}-(\partial_\xi g_{ij})\circ\varphi_t^{-1})\\
&+\frac{1}{2}g^{k,\bar\xi}\circ\varphi_t^{-1}
((\partial_jg_{i\bar\xi})\circ\varphi_t^{-1}+(\partial_ig_{j\bar\xi})\circ\varphi_t^{-1}-(\partial_{\bar\xi}g_{ij})\circ\varphi_t^{-1})\\
=&\Gamma_{ij}^k\circ\varphi_t^{-1}.
\end{aligned}
\end{equation*}
Similarly, we have
\begin{equation*}
\begin{aligned}
&\Gamma_{ij}^{\bar k}=e^t\Gamma_{ij}^{\bar k}\circ\varphi_t^{-1},~\Gamma_{\bar ij}^{k}=e^{-t}\Gamma_{\bar ij}^{k}\circ\varphi_t^{-1}\\
&\Gamma_{i\bar j}^{k}=e^{-t}\Gamma_{i\bar j}^{k}\circ\varphi_t^{-1},~\Gamma_{\bar ij}^{\bar k}=\Gamma_{\bar ij}^{\bar k}\circ\varphi_t^{-1}\\
&\Gamma_{i\bar j}^{\bar k}=\Gamma_{i\bar j}^{\bar k}\circ\varphi_t^{-1},~\Gamma_{\bar i\bar j}^{k}=e^{-2t}\Gamma_{\bar i\bar j}^{k}\circ\varphi_t^{-1}\\
&\Gamma_{\bar i\bar j}^{\bar k}=e^{-t}\Gamma_{\bar i\bar j}^{\bar k}\circ\varphi_t^{-1}.
\end{aligned}
\end{equation*}
Now suppose $\gamma_s(x)$ is a curve through $x$. In local coordinates $\gamma_s(x)$ is given by
\[
\gamma_s(x)=(\gamma^1(s),\ldots,\gamma^{n}(s),\gamma^{\bar 1}(s),\ldots,\gamma^{\bar n}(s)).
\]
Then $\varphi_t^{-1}(\gamma_s(x))$ is given by
\[
\varphi_t^{-1}(\gamma_s(x))=(\gamma^1(s),\ldots,\gamma^{n}(s),e^{-t}\gamma^{\bar 1}(s),\ldots,e^{-t}\gamma^{\bar n}(s)).
\]
Equation of geodesics in the local coordinates
\begin{equation*}
\begin{aligned}
&\frac{d^2\gamma^k}{ds}+\Gamma^k_{ij}\circ\varphi_t^{-1}\frac{d\gamma^i}{ds}\frac{d\gamma^j}{ds}
+\Gamma^k_{\bar ij}\circ\varphi_t^{-1}e^{-t}\frac{d\gamma^{\bar i}}{ds}\frac{d\gamma^j}{ds}\\
&~~+\Gamma^k_{i\bar j}\circ\varphi_t^{-1}\frac{d\gamma^{i}}{ds}e^{-t}\frac{d\gamma^{\bar j}}{ds}
+\Gamma^k_{\bar i\bar j}\circ\varphi_t^{-1}e^{-t}\frac{d\gamma^{\bar i}}{ds}e^{-t}\frac{d\gamma^{\bar j}}{ds}\\
=&\frac{d^2\gamma^k}{ds}+\Gamma^k_{ij}\frac{d\gamma^i}{ds}\frac{d\gamma^j}{ds}
+\Gamma^k_{\bar ij}\frac{d\gamma^{\bar i}}{ds}\frac{d\gamma^j}{ds}
+\Gamma^k_{i\bar j}\frac{d\gamma^{i}}{ds}\frac{d\gamma^{\bar j}}{ds}
+\Gamma^k_{\bar i\bar j}\frac{d\gamma^{\bar i}}{ds}\frac{d\gamma^{\bar j}}{ds}\\
=&0.~
\end{aligned}
\end{equation*}
We know $\gamma_s(x)$ is a geodesic if and only if $\varphi_t^{-1}(\gamma_s(x))$ is a geodesic.

Now if $x\in U_\frac{1}{2}$ such that $\pi(x)\in \Lambda$, we can prove the first assertion in a similar way.
Indeed, from equation $(\ref{eqcomm})$ we know
\begin{align*}
(\psi_{s\sharp})^*\varphi_t^*g&=\varphi_t^*(\psi_{s\sharp})^*g,\\
(\psi_{s\sharp})^*\varphi_t^*g&=e^t(\psi_{s\sharp})^*g,\\
\varphi_t^*g&=e^tg,\\
g&=e^t(\varphi_t^{-1})^*g.
\end{align*}
\end{proof}
From Corollary $\ref{cp2sgb}$ and Proposition $\ref{psgbcotangent}$, it is easy to get Corollary $\ref{cormain}$

\label{lastpage}

\begin{thebibliography}{10}

\bibitem{ag1990}
U. Abresch and D. Gromoll. On complete manifolds with nonnegative Ricci curvature. \textit{ J. Amer. Math. Soc.} \textbf{3} (1990), 355-374.


\bibitem{alp1994}
M. Audin, F. Lalonde and L. Polterovich. {Symplectic rigidity: Lagrangian Submanifolds}.
In \textit{{Holomorphic Curves in Symplectic Geometry}} (ed. M. Audin and J. Lafontaine). Progress in Mathematics, vol.117, pp. 271-321 (Basel: Birkh\"auser, 1994).


\bibitem{s2008}
A. Cannas da Silva. \textit{Lectures on Symplectic Geometry}. Lecture Notes in Mathematics, vol.1764, (Berlin Heidelberg: Springer-Verlag, 2001).


\bibitem{cgk2004}
K. Cieliebak, V.L. Ginzburg and E. Kerman. {Symplectic homology and periodic orbits near symplectic submanifolds}. \textit{Comment. Math. Helv.} \textbf{79} (2004), 554-581.

%\bibitem{eg1989}
%Eliashberg, Y. and Gromov, M., Convex symplectic manifolds. {Several
%  complex variables and complex geometry, part 2 (Santa Cruz, CA, 1989),
%  135-162, Proc. Sympos. Pure Math.,52, part 2}, 1991 .

\bibitem{fhv1990}
A. Floer, H. Hofer and C. Viterbo. {The Weinstein conjecture in $P\times
  \cb^l$}. \textit{ Math. Z.} \textbf{203} (1990), 469-482.

\bibitem{g1978}
R.E. Greene. {Complete metrics of bounded curvature on noncompact manifolds}. \textit{Arch. Math.} \textbf{31} (1978), 89-95.

\bibitem{g1985}
M. Gromov. {Pseudoholomorphic curves in symplectic manifolds}.  \textit{Invent. Math.} \textbf{82} (1985), 307-347.

\bibitem{hirsch1997}
M.W. Hirsch. \textit{Differential topology}. Graduate Texts in Mathematics, No. 33, (New York-Heidelberg: Springer-Verlag, 1976).


\bibitem{hv1988}
H. Hofer and C. Viterbo. The Weinstein conjecture in cotangent bundles and related results.
\textit{Ann. Scuola Norm. Sup. Pisa Cl. Sci.(4)} \textbf{15} (1988), 411-445.


\bibitem{hv1992}
H. Hofer and C. Viterbo. The Weinstein conjecture in the presence of holomorphic spheres. \textit{Comm. Pure Appl. Math.} \textbf{45} (1992), 583-622.


\bibitem{hz1994}
H. Hofer and E. Zehnder. \textit{Symplectic invariants and Hamiltonian dynamics} (Basel: Birkh\"auser Verlag, 2011).

\bibitem{lt2000}
G. Liu and G. Tian. Weinstein conjecture and GW-invariants.
\textit{Commun. Contemp. Math.} \textbf{2} (2000), 405-459.

\bibitem{lucor1998}
G. Lu. Correction to: "The Weinstein conjecture on some symplectic manifolds containing the holomorphic spheres''. \textit{Kyushu J. Math.} \textbf{54} (2000), 181-182.

\bibitem{lu1997}
G. Lu. The Arnold conjecture for a product of monotone manifolds and Calabi-Yau manifolds. \textit{Acta Math. Sinica} \textbf{13} (1997), 381-388.

\bibitem{lu1998}
G. Lu. The Weinstein conjecture on some symplectic manifolds containing the holomorphic spheres. \textit{Kyushu J. Math.} \textbf{52} (1998), 331-351.

\bibitem{ms1995}
D. McDuff and D. Salamon. \textit{Introduction to symplectic topology} (New York: Oxford University Press, 1998).

\bibitem{ms2004}
D. McDuff and D. Salamon. \textit{$J$-holomorphic curves and symplectic topology}. American Mathematical Society Colloquium Publications, vol.52, (Providence, RI: American Mathematical Society, 2012).

\bibitem{p2006}
P. Petersen. \textit{Riemannian geometry}. Graduate Texts in Mathematics, vol. 171. (New York: Springer, 2006).

\bibitem{v1987}
C. Viterbo. {A proof of Weinstein's conjecture in $\rb^{2n}$}. \textit{ Ann. Inst. H. Poincar\'e Anal. Non lin\'eaire} \textbf{4} (1987), 337-356.

\bibitem{wang2004}
Q. Wang. Finite topological type and volume growth. \textit{Ann. Global Anal. Geom.} \textbf{25} (2004), 1-9.

\bibitem{w1979}
A. Weinstein. On the hypotheses of Rabinowitz' periodic orbit theorems. \textit{J. Diff. Eq.} \textbf{33} (1979), 353-358.

\end{thebibliography}
\end{document}